\newtheorem{theorem}{Theorem}[section]
\newtheorem{lemma}[theorem]{Lemma}
\newtheorem{proposition}[theorem]{Proposition}
\newtheorem{corollary}[theorem]{Corollary}
\theoremstyle{definition}
\newtheorem{definition}[theorem]{Definition}
\theoremstyle{remark}
\newtheorem{rem}[theorem]{Remark}
\newtheorem{rems}[theorem]{Remarks}
\newtheorem{example}[theorem]{Example}
\newtheorem{examples}[theorem]{Examples}
\newcommand\pf{\begin{proof}}
\newcommand\epf{\end{proof}}
\newcommand\C{\mathbb{C}}
\newcommand\tor{\mathrm{Tor}}
\newcommand\ext{\mathrm{Ext}}
\DeclareMathOperator{\Hom}{Hom}
\DeclareMathOperator{\id}{id}
\numberwithin{equation}{section}
\title{Homological properties of quantum permutation algebras}
\author{Julien Bichon}
\address{
Laboratoire de Math\'ematiques Blaise Pascal,
Universit\'e Clermont Auvergne,
Complexe universitaire des C\'ezeaux,
3 place Vasar\'ely
63178~Aubi\`ere Cedex, France}
\email{julien.bichon@uca.fr}
\author{Uwe Franz}
\address{
Laboratoire de math\'ematiques de Besan\c{c}on,
UMR 6623, CNRS, Universit\'e Bourgogne Franche-Comt\'e,
16 route de Gray,
25030 Besan\c{c}on cedex, France}
\email{uwe.franz@univ-fcomte.fr}
\author{Malte Gerhold}
\address{
Institut f\"ur Mathematik und Informatik,
Walther-Rathenau-Str. 47,
17489 Greifswald, Germany}
\email{malte.gerhold@uni-greifswald.de}
\thanks{This work  was supported by the French ``Investissements d'Avenir'' program, project ISITE-BFC (contract ANR-15-IDEX-03). We acknowledge support by MAEDI/MENESR and DAAD through the PROCOPE programme.}
\subjclass[2010]{16T05, 16E40, 16E65}
\begin{document}

\begin{abstract}
We show that $A_s(n)$, the coordinate algebra of Wang's quantum permutation group, is Calabi-Yau of dimension $3$
when $n\geq 4$, and compute its Hochschild cohomology with trivial coefficients.
We also show that, for a larger class of quantum permutation algebras, including those representing quantum symmetry groups of finite graphs,  the second Hochschild cohomology group with trivial coefficients  vanishes, and hence these algebras have the AC property considered in quantum probability: all 
cocycles can be completed to a Sch\"urmann triple.
\end{abstract}

\maketitle

\section{Introduction}

Let $n\geq 1$, and let $A_s(n)$ be the universal quantum permutation algebra: this means that 
$A_s(n)$ is the algebra presented by generators $u_{ij}$, $1\leq i,j \leq n$, subject to the relations of permutation matrices
$$u_{ij}u_{ik}=\delta_{jk}u_{ij}, \ u_{ji}u_{ki}=\delta_{jk}u_{ji}, \ \sum_{j=1}^nu_{ij}=1=\sum_{j=1}^nu_{ji}$$
It  has a natural Hopf algebra structure given by
$$\Delta(u_{ij})
= \sum_{k=1}^n u_{ik} \otimes u_{kj}, \ 
\varepsilon(u_{ij}) = \delta_{ij}, \ 
S(u_{ij}) = u_{ji}$$
The Hopf algebra $A_s(n)$ arose in Wang's work on quantum group actions on finite-dimensional algebras \cite{wan98}, and represents the quantum permutation group $S_n^+$, the largest compact quantum group acting on a set consisting of $n$ points \cite{wan98}. More generally, the Hopf algebra $A_s(n)$ is the universal cosemisimple Hopf algebra coacting on the algebra $\mathbb C^n$ \cite{bi08}. 

The algebra $A_s(n)$ is infinite-dimensional if $n\geq 4$ \cite{wan98}, so that the quantum permutation group $S_n^+$ is infinite in that case, although it acts faithfully on a classical finite set. This intriguing property has brought a lot of attention on $A_s(n)$ among researchers in quantum group theory in the last twenty years, and a number of important contributions elucidate its structure: representation theory \cite{ba99}, operator-theoretic approximation properties \cite{bra}, operator $K$-theory \cite{voi}. On the homological algebra side, the cohomological dimension of $A_s(n)$ has been computed recently in \cite{bic}. In this paper we go deeper in the homological study, proving the following result (the cohomology  in the statement being Hochschild cohomology with trivial coefficients).

\begin{theorem}\label{thm:asncala}
Assume that $n \geq 4$. The algebra $A_s(n)$ is Calabi-Yau of dimension $3$, and we have
$$H^p(A_s(n), \mathbb C) \simeq \begin{cases}
                    \mathbb C  & \text{if $p=0,3$} \\
 0 &\text{otherwise} 
                   \end{cases}
$$ 
\end{theorem}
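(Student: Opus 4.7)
The plan is to exploit a monoidal equivalence between $\text{Corep}(A_s(n))$ and $\text{Corep}(A_o(F))$ for a suitably chosen matrix $F\in\GL_2(\mathbb{C})$. This equivalence is available because, by Banica's description of the fusion rules of $S_n^+$, the corepresentation category of $A_s(n)$ for $n\geq 4$ is the Temperley--Lieb category with loop parameter $\sqrt n$, which is also realised as $\text{Corep}(A_o(F))$ for $F$ satisfying $\text{tr}(F^*F)\text{tr}((F^*F)^{-1})=n$. Both Hopf algebras are cosemisimple, so homological invariants that are monoidal in nature can be transported through this equivalence.

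The transfer mechanism is Bichon's free Yetter--Drinfeld resolution of the counit: for any cosemisimple Hopf algebra $A$ one has $H^p(A,\mathbb{C})\simeq \Ext^p_{\mathcal{YD}(A)}(\mathbb{C},\mathbb{C})$, and the braided tensor category $\mathcal{YD}(A)$ is determined, as a braided category, by the monoidal category $\text{Corep}(A)$. Consequently, Hochschild cohomology with trivial coefficients is a monoidal invariant among cosemisimple Hopf algebras, and the computation reduces to the analogous statement for $A_o(F)$, which is known: $H^0=H^3=\mathbb{C}$ and all other degrees vanish. Combined with the vanishing of $H^p$ for $p>3$ already implicit in the cohomological dimension result of [bic], this gives the displayed cohomology, and at the same time establishes that $A_s(n)$ is at least twisted Calabi--Yau of dimension $3$ in the sense of Van den Bergh (the cohomological dimension is $3$ and $\Ext^3(\mathbb{C},A_s(n))$ is a one-dimensional representation).

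To obtain the genuine Calabi--Yau property, one must show that this one-dimensional representation is the trivial one, equivalently that the Nakayama automorphism of the duality is inner, equivalently that the left homological integral of $A_s(n)$ coincides with the counit. The plan is again to transfer: $A_o(F)$ is known to be Calabi--Yau of dimension $3$ for the relevant $F$, so its homological integral is $\varepsilon$, and the homological integral of a cosemisimple Hopf algebra is encoded by the fibre functor $\text{Corep}(A)\to \text{Vect}$ together with the action of the square of the antipode on the fundamental corepresentation. I expect this last step to be the main obstacle, since one needs to check that the fibre functor producing the equivalence carries the relevant grouplike data of $A_o(F)$ to that of $A_s(n)$, so that the untwisted CY property really does transfer and one does not merely inherit a \emph{conjugate} of the trivial twist. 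Once this compatibility is verified, the identification $H^3(A_s(n),\mathbb{C})\simeq\mathbb{C}$ confirms the untwisted Calabi--Yau property of dimension $3$.
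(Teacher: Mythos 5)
Your overall strategy---transporting homological information through a monoidal equivalence of comodule categories---is indeed the backbone of the paper's argument, but the equivalence you start from does not exist, and this is fatal to the plan as written. By Banica's computation, the irreducible corepresentations of $A_s(n)$ for $n\geq 4$ obey the fusion rules of $SO(3)$, not of $SU(2)$: the relevant category is the \emph{even} part of Temperley--Lieb, and the correct statement (Mrozinski's \cite[Corollary 4.9]{mro}, which the paper uses) is $\mathcal M^{A_s(n)}\simeq^{\otimes}\mathcal M^{\mathcal O({\rm PSL}_q(2))}$, not an equivalence with $\mathcal M^{A_o(F)}\simeq^{\otimes}\mathcal M^{\mathcal O({\rm SL}_q(2))}$. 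No monoidal equivalence with $A_o(F)$ is possible: the fundamental corepresentation $v$ of $A_o(F)$ satisfies $v\otimes v\simeq 1\oplus w$ with $w$ irreducible, while every nontrivial irreducible $r_k$ of $A_s(n)$ satisfies $r_k\otimes r_k\simeq r_0\oplus r_1\oplus\cdots\oplus r_{2k}$, which never has exactly two summands. Your own transfer principle detects the problem: granting (correctly, for cosemisimple Hopf algebras, via bialgebra cohomology) that $H^*(-,\mathbb C)$ is a monoidal invariant, your equivalence would force $H^1(A_s(n),\mathbb C)\simeq H^1(A_o(F),\mathbb C)$; but $H^1(A_o(F),\mathbb C)\neq 0$ for every $F\in\GL_2(\mathbb C)$ (on $\mathcal O({\rm SL}_q(2))$ the functional $a\mapsto 1$, $d\mapsto -1$, $b,c\mapsto 0$ is a noninner $\varepsilon$-derivation), whereas $H^1(A_s(n),\mathbb C)=0$ since $A_s(n)$ is generated by idempotents. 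So the input ``$H^p(A_o(F),\mathbb C)=0$ for $p\neq 0,3$'' is also false.

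Replacing $A_o(F)$ by $\mathcal O({\rm PSL}_q(2))$ recovers the paper's route, but then the key input---that $\mathcal O({\rm PSL}_q(2))$ is twisted Calabi--Yau of dimension $3$---is not an off-the-shelf fact: the paper proves it (Corollary \ref{coro:pslCY}) via Theorem \ref{thm:subcala} applied to the strict exact sequence $\mathbb C\to\mathcal O({\rm PSL}_q(2))\to\mathcal O({\rm SL}_q(2))\to\mathbb C\mathbb Z_2\to\mathbb C$, and this is the main technical content of Section 3, absent from your plan. Your final step is also not repairable in the form proposed: triviality of the homological integral is \emph{not} a monoidal invariant (for instance $A_o(3)$ is Calabi--Yau while the monoidally equivalent $\mathcal O({\rm SL}_q(2))$ is only twisted Calabi--Yau with nontrivial Nakayama automorphism), so one cannot expect the fibre functor to ``carry the grouplike data'' across. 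The paper sidesteps this with Lemma \ref{lemm:tcy-cy}: once $A_s(n)$ is twisted Calabi--Yau of dimension $3$, the nonvanishing of ${\rm Ext}^3_{A_s(n)}(\mathbb C_\varepsilon,\mathbb C_\varepsilon)$---obtained from the genuinely invariant bialgebra cohomology computed in \cite{bic}---forces the twist to be $\varepsilon$. Finally, note that the paper gets $H^1=H^2=0$ by direct elementary computation (projection generators, and ${\rm Tor}_1^{A_s(n)}(\mathbb C_\varepsilon,{_\varepsilon\!\mathbb C})\simeq A_s(n)^+/(A_s(n)^+)^2=0$) and $H^3\simeq\mathbb C$ by Poincar\'e duality from $H^0$, rather than by transfer.
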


The Calabi-Yau property, which has been the subject of numerous investigations in recent years, was named in \cite{gin}, and  is the algebraic generalization of the notion of orientable Poincar\'e duality group (\cite{biec}, see \cite[VIII.10]{brown}). One of its interests is that it provides a duality between Hochschild homology and cohomology \cite{vdb}, similar to Poincar\'e duality in algebraic topology.

In fact we will prove (Theorem \ref{thm:Aaut}) that the quantum symmetry algebra of a finite-dimensional semisimple normalizable measured algebra of dimension $\geq 4$ is always twisted Calabi-Yau of dimension $3$, see Section 4 for details. To prove this result, our main tools will be a new result that gives a condition that ensures that a Hopf subalgebra of a twisted Calabi-Yau algebra of dimension $d$ still is a twisted Calabi-Yau of dimension $d$ (see Theorem \ref{thm:subcala} in Section 3), techniques from recent papers of the first author \cite{bic13,bic}, and crucial results of Mrozinski \cite{mro} and of Wang-Yu-Zhang \cite{wyz}.

Notice that Theorem \ref{thm:asncala} states in particular that the second cohomology group $H^2(A_s(n), \mathbb C)$ vanishes, and hence in particular $A_s(n)$ has the property called AC in \cite{fgt}. The AC property is of particular interest in quantum probability and the study of L\'evy processes on quantum groups: it means that all cocycles can be completed to a Sch\"urmann triple. See \cite{fgt} for details, and the recent survey \cite{fks} on these questions, where the AC property was shown for $A_s(n)$.
In fact we will show that the vanishing result for the second cohomology (and hence the AC property) holds for a large class of quantum permutation algebras (quotients of $A_s(n)$), including those representing quantum symmetry groups of finite graphs, in the sense of \cite{ban}: see Theorem \ref{thm:vanish}.

The paper is organized as follows. Section 2 consists of preliminaries. In Section $3$ we discuss 
Hopf subalgebras of Hopf algebras that are twisted Calabi-Yau. In Section $4$ we prove our results regarding the Calabi-Yau  property for quantum symmetry algebras and Theorem \ref{thm:asncala}. The final Section 5 is devoted to prove the vanishing of the second cohomology (with trivial coefficients) of a large class of quantum permutation algebras.

\section{Preliminaries}

\subsection{Notations and conventions}
We work over $\mathbb C$ (or over any algebraically closed field of characteristic zero). 
We assume that the reader is familiar with the theory of Hopf algebras and their tensor categories of comodules, as e.g. in \cite{kas,ks,mon}. 
If $A$ is a Hopf algebra, as usual, $\Delta$, $\varepsilon$ and $S$ stand respectively for the comultiplication, counit and antipode of $A$. As usual, the augmentation ideal ${\rm Ker}(\varepsilon)$ is denoted $A^+$.
We use Sweedler's notations in the standard way. 

The category of right $A$-comodules is denoted $\mathcal M^A$, the category of right $A$-modules is denoted $\mathcal M_A$, etc...  
The trivial right $A$-module is denoted $\C_\varepsilon$, the trivial left $A$-module is denoted ${_\varepsilon \! \mathbb C}$, and the trivial $A$-bimodule ${_\varepsilon \!\C}_\varepsilon$ is simply denoted  $\mathbb C$.
The set of $A$-module morphisms (resp. $A$-comodule morphisms) between two $A$-modules (resp. two $A$-comodules) $V$ and $W$ is denoted ${\rm Hom}_A(V,W)$ (resp. ${\rm Hom}^A(V,W)$).
We will mostly consider right $A$-modules, and the corresponding ${\rm Ext}$-spaces, denoted ${\rm Ext}_A^*(-,-)$, will always be considered in the category of right $A$-modules

We also assume some familiarity with homological algebra \cite{wei}, but for the reader's convenience, we recall the definition of Hochschild (co)homology:
if $A$ is an algebra and $M$ is an $A$-bimodule, the Hochschild cohomology
spaces $H^*(A,M)$ are the cohomology spaces of the complex
$$ 0 \longrightarrow {\rm Hom}(\C,M) \overset{\delta}\longrightarrow \Hom(A,M) \overset{\delta}\longrightarrow
\cdots \overset{\delta}\longrightarrow\Hom(A^{\otimes n}, M) \overset{\delta}\longrightarrow \Hom(A^{\otimes n+1}, M) \overset{\delta}\longrightarrow \cdots$$
where the differential $\delta \colon \Hom(A^{\otimes n}, M) \longrightarrow \Hom(A^{\otimes n+1}, M)$
is given by 
\begin{align*}
\delta(f)(a_1 \otimes \cdots \otimes a_{n+1}) = &a_1\cdot
f(a_2 \otimes \cdots \otimes a_{n+1}) + \sum_{i=1}^{n}(-1)^i f(a_1 \otimes \cdots \otimes a_i a_{i+1} \otimes \cdots \otimes a_{n+1}) \\
&+ (-1)^{n+1} f(a_1 \otimes \cdots \otimes a_{n}) \cdot a_{n+1}
\end{align*}
The Hochschild cohomological dimension of $A$, which serves as a noncommutative analogue of the dimension of an algebraic variety, is defined by
\begin{align*}
{\rm cd}(A)&= {\rm sup}\{n : \ H^n(A, M) \not=0 \ \text{for some $A$-bimodule $M$}\}\in \mathbb N \cup \{\infty\} \\
& = {\rm min}\{n : \ H^{n+1}(A, M) =0  \ \text{for any  $A$-bimodule} \ M\}  \\
&={\rm pd}_{_A\mathcal M_A}(A) 
\end{align*}
where ${\rm pd}_{_A\mathcal M_A}(A)$ is the projective dimension of $A$ in the category of $A$-bimodules.

The Hochschild homology
spaces $H_*(A,M)$ are the homology spaces of the complex
$$\cdots \longrightarrow  M \otimes A^{\otimes n}  \overset{b}\longrightarrow M \otimes A^{\otimes n-1} \overset{b}\longrightarrow \cdots 
\overset{b}\longrightarrow M \otimes A \overset{b}\longrightarrow M \longrightarrow 0 \quad $$
where the differential $b \colon  M \otimes A^{\otimes n}  \longrightarrow M \otimes A^{\otimes n-1}$ is given by
\begin{align*}
b(x\otimes a_1 \otimes \cdots \otimes a_n  )
=&x \cdot a_1 \otimes \cdots \otimes a_{n} + 
 \sum_{i=1}^{n-1}(-1)^i x \otimes a_1 \otimes \cdots \otimes a_i a_{i+1} \otimes \cdots \otimes a_{n} \\
&+ (-1)^n a_n\cdot x \otimes a_1 \otimes \cdots \otimes a_{n-1}
\end{align*}

As said in the introduction, the Calabi-Yau property is a condition that ensures a Poincar\'e type duality between Hochschild cohomology and homology. We will not give the definition here, since we will consider this property for Hopf algebras only, and that in this framework there is a simpler and more useful one (at least to us), to be given in the next subsection.

\subsection{Hochschild cohomology, Hopf algebras, and the Calabi-Yau property} Let us first recall the well-known fact that for a Hopf algebra, the previous Hochschild (co)homology spaces can described by using suitable ${\rm Ext}$ and ${\rm Tor}$ on the category of left or right modules. Indeed, let $A$ be Hopf algebra and let $M$ be an $A$-bimodule. We then have
$$H_*(A,M) \simeq \tor^A_*(\C_\varepsilon, M'), \quad H^*(A,M) \simeq \ext^*_A(\C_\varepsilon, M'')$$
where $M'$ is $M$ having the left $A$-module structure defined by $ a \rightarrow x = a_{(2)}\cdot x \cdot S(a_{(1)})$, and $M^{''}$ is $M$ having the right $A$-module structure defined by
 $
 x \leftarrow a = S(a_{(1)})\cdot x \cdot a_{(2)}$. 
See for example \cite{bic13,bz}, and the references therein.

 
\begin{definition}
A Hopf algebra $A$ is said to be \textsl{twisted Calabi-Yau of dimension  $d\geq 0$} if it satisfies the following conditions.
\begin{enumerate}
 \item $A$ is homologically smooth, i.e.\ the trivial $A$-module $\mathbb C_\varepsilon$ admits a finite resolution by finitely generated projective $A$-modules.
\item ${\rm Ext}_A^i({\mathbb C_\varepsilon, A_A})=0$ if $i \not=d$, and ${\rm Ext}_A^d(\mathbb C_\varepsilon, A_A)$ is one-dimensional, so that there exists an algebra map $\alpha : A \to \mathbb C$ such that ${\rm Ext}_A^d(\mathbb C_\varepsilon, A_A)\simeq {_\alpha\!\mathbb C}$ as left $A$-modules. The algebra is said to be Calabi-Yau if $\alpha=\varepsilon$.
\end{enumerate} 
\end{definition}

The equivalence of this definition with the usual one for twisted Calabi-Yau algebras has been discussed in several papers, we refer the reader to \cite{wyz}. Notice that if  $A$ is  twisted Calabi-Yau of dimension  $d$, then $d={\rm cd}(A)$.

The induced homological duality is as follows, see e.g. \cite{bz}. 
If ${\rm Ext}^d_A(\mathbb C , A)\simeq {_\alpha\!\mathbb C}$, then consider the algebra anti-morphism $\theta \colon A \rightarrow A$ defined by  $\theta(a)=S(a_{(1)})\alpha(a_{(2)})$. We then have, for any right $A$-module $M$,
$${\rm Ext}_A^i(\mathbb C_\varepsilon, M) \simeq {\rm Tor}_{d-i}^A(\mathbb C_\varepsilon, {_{\theta}\!M})$$ 
where ${_{\theta}\!M}$ is $M$ having the left $A$-module structure given by $a\cdot x=x\cdot\theta(a)$. At the level of Hochschild cohomology, letting $\sigma = S\theta$, we have, for any $A$-bimodule $M$,
$$H^i(A,M) \simeq H_{d-i}(A, {_{\sigma}\!M})$$ 
where ${_{\sigma}\!M}$ has the $A$-bimodule structure given by $a\cdot x \cdot b= \sigma(a)\cdot x\cdot b$.

The following easy lemma will be used in Section 4.

\begin{lemma}\label{lemm:tcy-cy}
 Let $A$ be Hopf algebra, and assume that $A$ is twisted Calabi-Yau of dimension $d$. If ${\rm Ext}_A^d(\mathbb C_\varepsilon, \mathbb C_\varepsilon) \not=(0)$, then $A$ is Calabi-Yau. 
\end{lemma}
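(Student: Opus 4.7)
The plan is to use the homological duality stated just before the lemma to convert the hypothesis on $\mathrm{Ext}^d_A(\mathbb C_\varepsilon,\mathbb C_\varepsilon)$ into a statement about a tensor product, which then forces $\alpha = \varepsilon$.

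First, since $A$ is assumed twisted Calabi-Yau of dimension $d$ with $\mathrm{Ext}^d_A(\mathbb C_\varepsilon, A_A) \simeq {_\alpha\!\mathbb C}$, the paragraph preceding the lemma provides the isomorphism
$$\mathrm{Ext}_A^i(\mathbb C_\varepsilon, M) \simeq \mathrm{Tor}_{d-i}^A(\mathbb C_\varepsilon, {_\theta\!M})$$
for every right $A$-module $M$, where $\theta(a)=S(a_{(1)})\alpha(a_{(2)})$. I would specialize this to $i = d$ and $M = \mathbb C_\varepsilon$, obtaining
$$\mathrm{Ext}_A^d(\mathbb C_\varepsilon, \mathbb C_\varepsilon) \simeq \mathrm{Tor}_0^A(\mathbb C_\varepsilon, {_\theta\!\mathbb C_\varepsilon}) = \mathbb C_\varepsilon \otimes_A {_\theta\!\mathbb C_\varepsilon}.$$

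Next, I would identify the left $A$-module ${_\theta\!\mathbb C_\varepsilon}$ explicitly. Since $\mathbb C_\varepsilon$ carries the trivial right $A$-action, the induced left action on ${_\theta\!\mathbb C_\varepsilon}$ is $a\cdot 1 = 1\cdot \theta(a) = \varepsilon(\theta(a))$, and a direct computation gives $\varepsilon(\theta(a)) = \varepsilon(S(a_{(1)}))\alpha(a_{(2)}) = \alpha(\varepsilon(a_{(1)})a_{(2)}) = \alpha(a)$. Hence ${_\theta\!\mathbb C_\varepsilon} = {_\alpha\!\mathbb C}$ as left $A$-modules, and therefore
$$\mathrm{Ext}_A^d(\mathbb C_\varepsilon, \mathbb C_\varepsilon) \simeq \mathbb C_\varepsilon \otimes_A {_\alpha\!\mathbb C}.$$

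Finally, the conclusion follows by inspecting the tensor product on the right. For any $a\in A$ and $x\otimes y$ in $\mathbb C_\varepsilon \otimes_A {_\alpha\!\mathbb C}$, one has $\varepsilon(a)\,x\otimes y = x\cdot a\otimes y = x\otimes a\cdot y = \alpha(a)\,x\otimes y$. If $\alpha \neq \varepsilon$, one can pick $a \in A$ with $\alpha(a)-\varepsilon(a)\neq 0$, which forces every elementary tensor, and hence the whole tensor product, to vanish. Under the hypothesis $\mathrm{Ext}_A^d(\mathbb C_\varepsilon, \mathbb C_\varepsilon)\neq(0)$, this is impossible, so $\alpha = \varepsilon$, meaning $A$ is Calabi-Yau. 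There is no serious obstacle in this argument; the only care needed is in correctly tracking that $\varepsilon \circ \theta = \alpha$ under the given conventions, the rest being a formal consequence of the stated duality.
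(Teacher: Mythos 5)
Your proof is correct and follows essentially the same route as the paper: apply the stated homological duality with $i=d$, $M=\mathbb C_\varepsilon$, identify ${_\theta\!\mathbb C_\varepsilon}$ with ${_\alpha\!\mathbb C}$ via $\varepsilon\circ\theta=\alpha$, and observe that $\mathbb C_\varepsilon\otimes_A{_\alpha\!\mathbb C}$ vanishes unless $\alpha=\varepsilon$. The paper merely compresses these verifications into the single line ${\rm Ext}^d_A(\mathbb C_\varepsilon,\mathbb C_\varepsilon)\simeq{\rm Tor}_0^A(\mathbb C_\varepsilon,{_\alpha\!\mathbb C})$, which you have usefully spelled out.
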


\begin{proof}
Let $\alpha \colon A \rightarrow \mathbb C$ be the algebra map such that ${\rm Ext}^d_A(\mathbb C_\varepsilon , A_A)\simeq {_\alpha\!\mathbb C}$. By the previous homological duality, we have
$${\rm Ext}^d_A(\mathbb C_\varepsilon, \mathbb C_\varepsilon) \simeq {\rm Tor}_0^A(\mathbb C_\varepsilon, {_\alpha \!\mathbb C})
\simeq \begin{cases}
  0 & \text{if $\alpha \not= \varepsilon$} \\
\mathbb C & \text{if $\alpha = \varepsilon$}
 \end{cases}
$$
Hence the assumption ensures that $\alpha = \varepsilon$, and $A$ is Calabi-Yau, as claimed.
\end{proof}

As examples of Hopf algebras that are twisted Calabi-Yau, let us mention, for example, the group algebras of Poincar\'e duality groups \cite{brown}, the coordinate algebras of the $q$-deformed quantum groups \cite{bz}, the coordinate algebras of the free orthogonal quantum groups \cite{cht}, and more generally the coordinate algebras of quantum symmetry groups of nondegenerate bilinear forms \cite{bic13,walwan,yu}.

\subsection{Exact sequences of Hopf algebras}
A sequence  of Hopf algebra maps
\begin{equation*}\C \to B \overset{i}\to A \overset{p}\to L \to
\C\end{equation*} is said to be exact \cite{ad} if the following
conditions hold:
\begin{enumerate}\item $i$ is injective and $p$ is surjective,
\item ${\rm Ker}(p) =Ai(B)^+ =i(B)^+A$, where $i(B)^+=i(B)\cap{\rm Ker}(\varepsilon)$,
\item $i(B) = A^{{\rm co} L} = \{ a \in A:\, ({\id} \otimes p)\Delta(a) = a \otimes 1
\} = {^{{\rm co} L}A} = \{ a \in A:\, (p \otimes {\id})\Delta(a) = 1 \otimes a
\}$. \end{enumerate}
Note that condition (2) implies  $pi= \varepsilon 1$. We do not lose generality in assuming that $B \subset A$ is a Hopf subalgebra and that $i$ is the inclusion map.

An exact sequence as above and such that $A$ is faithfully flat as a right $B$-module is called strict \cite{schneider}. If $L$ is cosemisimple, then an exact sequence is automatically strict, by \cite[Theorem 2]{tak}, since $p\colon A \rightarrow L$ is then faithfully coflat.

The example we are interested in is the following one. Let $\mathcal O({\rm SL}_q(2))$ be the coordinate algebra on quantum ${\rm SL}(2)$, with its standard generators $a,b,c,d$. Let $\mathcal O({\rm PSL}_q(2))$ be the subalgebra generated by the elements $xy$, $x,y \in\{a,b,c,d\}$, and let $p \colon \mathcal O({\rm SL}_q(2)) \rightarrow \mathbb C \mathbb Z_2$ be the Hopf algebra map defined by $\pi(a)=\pi(d)=g$ and $\pi(b)=\pi(c)=0$, where $g$ is the generator of the the cyclic group $\mathbb Z_2$. Then it is a direct verification that the sequence   
\begin{equation*}\C \to  \mathcal O({\rm PSL}_q(2)) \to \mathcal O({\rm SL}_q(2)) \overset{p}\to \mathbb C \mathbb Z_2 \to
\C\end{equation*}
is exact. This exact sequence is strict since $\mathbb C \mathbb Z_2$ is cosemisimple.

\section{Hopf subalgebras and the Calabi-Yau property}

Let $B \subset A$ be a Hopf subalgebra. The aim of this section is to provide a condition that ensures that if $A$ is twisted Calabi-Yau of dimension $d$, then so is $B$.
Our result is the following one.

\begin{theorem}\label{thm:subcala}
 Let $\mathbb C \to B \to A \to L \to \mathbb C$ be a strict exact sequence of Hopf algebras with bijective antipodes, with $L$  finite-dimensional. The following assertions are equivalent.
\begin{enumerate}
\item $A$ is a twisted Calabi-Yau algebra of dimension $d$.
\item ${\rm cd}(A)$ is finite and $B$ is a twisted Calabi-Yau algebra of dimension $d$. 
\end{enumerate}
\end{theorem}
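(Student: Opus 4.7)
The plan rests on the following structural observation. Strictness of the sequence means $A$ is faithfully flat over $B$; combined with $\dim L<\infty$ and bijective antipodes, Hopf-Galois theory implies that $A$ is finitely generated projective (indeed free of rank $\dim L$, by a normal basis argument) as both a left and a right $B$-module. Consequently, restriction $\mathcal M_A \to \mathcal M_B$ and induction $-\otimes_B A\colon \mathcal M_B \to \mathcal M_A$ are exact and preserve finite generation of projectives.

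I would first treat homological smoothness. For $(1)\Rightarrow(2)$, restricting a finite finitely generated projective resolution of $\mathbb C_\varepsilon$ over $A$ yields one over $B$. For $(2)\Rightarrow(1)$, induction produces a finite finitely generated projective resolution of $\mathbb C_\varepsilon\otimes_B A = A/B^+A \simeq L$ over $A$; one then descends from $L$ to $\mathbb C_\varepsilon$ by iterating the short exact sequence $0\to\Ker(\varepsilon_L)\to L\to\mathbb C_\varepsilon\to 0$ of finite-dimensional $A$-modules, invoking $\mathrm{cd}(A)<\infty$ to keep projective dimensions bounded at each step.

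For the twisted Calabi--Yau identity on $\ext^*(\mathbb C_\varepsilon,-)$, the central tool is the Hochschild--Serre spectral sequence
$$E_2^{p,q} = \ext^p_L\bigl(\mathbb C_\varepsilon, \ext^q_B(\mathbb C_\varepsilon, M|_B)\bigr) \Rightarrow \ext^{p+q}_A(\mathbb C_\varepsilon, M),$$
attached to the strict exact sequence; it is obtained from the factorization $(-)^A = (-)^L \circ (-)^B$ of the invariant functors (well-defined because $B$ is a normal Hopf subalgebra). Applied to $M=A_A$, the Hopf-Galois/cleft structure gives $\ext^q_B(\mathbb C_\varepsilon, A|_B)\simeq L\otimes \ext^q_B(\mathbb C_\varepsilon, B_B)$ with $L$ acting via its regular representation on the first factor. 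Since $L$ is finite-dimensional, hence Frobenius and self-injective, $\ext^p_L(\mathbb C_\varepsilon, L\otimes V)$ vanishes for $p>0$ and is naturally isomorphic to $V$ for $p=0$ (through the one-dimensional space of right integrals of $L$). The spectral sequence therefore collapses onto the row $p=0$, yielding a natural isomorphism
$$\ext^*_A(\mathbb C_\varepsilon, A_A) \simeq \ext^*_B(\mathbb C_\varepsilon, B_B).$$
This transports the concentration and one-dimensionality in the top degree between $A$ and $B$, so that either side being twisted Calabi--Yau of dimension $d$ forces the other to be as well, establishing both implications.

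The main obstacle will be the precise identification of the $L$-module structure on $\ext^q_B(\mathbb C_\varepsilon, A|_B)$ as a free $L$-module arising from the cleft/Hopf-Galois decomposition of $A$ over $B$: this requires carefully following the right $L$-coaction on $A$ through the derived functor. Once this is in hand, the self-injectivity of finite-dimensional Hopf algebras makes the spectral sequence collapse automatically, and the theorem follows.
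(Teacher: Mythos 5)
Your reduction of the $\ext$-computation to a Hochschild--Serre spectral sequence is a genuinely different (and in principle viable) route from the paper's, but the proof of homological smoothness of $A$ in the direction $(2)\Rightarrow(1)$ has a real gap. Inducing a finite finitely generated projective $B$-resolution of $\C_\varepsilon$ up to $A$ gives such a resolution of $L=\C_\varepsilon\otimes_B A$, but your descent from $L$ to $\C_\varepsilon$ via the sequence $0\to \Ker(\varepsilon_L)\to L\to \C_\varepsilon\to 0$ does not go through: finiteness of ${\rm cd}(A)$ controls projective \emph{dimension}, not finite \emph{generation}, and the whole content of homological smoothness is the latter. To conclude that $\C_\varepsilon$ is of type $FP_\infty$ from the exact sequence you would need $\Ker(\varepsilon_L)$ to be of type $FP_\infty$ over $A$ first; by the tensor identity ($V\otimes A\simeq V_\varepsilon\otimes A$ for finite-dimensional $V$), that statement is equivalent to the one you are trying to prove, so the iteration is circular. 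It terminates only when $\C_\varepsilon$ is a direct summand of $L$, i.e.\ when $L$ is semisimple, which is not assumed. The correct argument runs in the opposite direction, as in the paper (Propositions \ref{prop:charFP}--\ref{prop:subFP}, following \cite[Ch.~VIII]{brown}): take a partial resolution of $\C_\varepsilon$ by finitely generated projective $A$-modules, restrict it to $B$ (where each term is still finitely generated projective because $A$ is finitely generated projective over $B$), deduce that each syzygy is finitely generated over $B$ and hence over $A$, and then truncate using ${\rm cd}(A)<\infty$ via Proposition \ref{prop:charsmmoth}.

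For the key isomorphism $\ext^*_A(\C_\varepsilon,A_A)\simeq\ext^*_B(\C_\varepsilon,B_B)$ your spectral-sequence collapse (using self-injectivity of the finite-dimensional Hopf algebra $L$) should work, but note that the step you flag as the main obstacle is indeed nontrivial: the decomposition $\ext^q_B(\C_\varepsilon,A|_B)\simeq L\otimes\ext^q_B(\C_\varepsilon,B)$ cannot be justified by cleftness, since a strict exact sequence with $L$ finite-dimensional need not be cleft (nor is $A$ automatically free with normal basis over $B$; Schneider's results only give finitely generated projectivity, which is all the paper uses). One would instead have to exhibit $\ext^q_B(\C_\varepsilon,A|_B)$ as a Hopf module over $L$ and invoke the fundamental theorem. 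The paper sidesteps both the spectral sequence and this identification by an elementary averaging argument (Proposition \ref{prop:subext}): a lift $t\in A$ of the integral $\tau\in L$ gives a map $f\mapsto f(\,\cdot\, S(t_{(1)}))\cdot t_{(2)}$ from $\Hom_B(M,N^{{\rm co}L})$ to $\Hom_A(M,N)$, whose inverse is composition with the conditional expectation $x\mapsto h(x_{(1)})x_{(0)}$ built from the left integral $h$ on $L$; this yields the isomorphism directly on the level of complexes, for arbitrary $M\in\mathcal M_A$ and $N\in\mathcal M_A^L$, which is both more general and more robust than the collapse argument.
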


The proof will be  given at the end of the section. The proof of $(1)\Rightarrow (2)$  will be a consequence of two results of independent interest (Propositions \ref{prop:subsmooth} and \ref{prop:subext}), while the proof of $(2)\Rightarrow (1)$ will follow from Proposition \ref{prop:subext} together with some considerations on resolutions by finitely generated projective modules from \cite[Chapter VIII]{brown}. 

\subsection{Relative integrals}
We will use the following notion.

\begin{definition}
 Let $B \subset A$ be a Hopf subalgebra. 
\textsl{ A right $A/B$-integral} is an element $t \in  A$ such that  
 for any $a \in A$, we have
$ta -\varepsilon(a)t \in B^+A$. A right $A/B$-integral $t$ is said to be non-trivial if $t \not\in B^+A$.
\end{definition}

\begin{examples}\label{exam:integ}
\begin{enumerate}
 \item 
 Let $A$ be a Hopf algebra. A right $A/\mathbb C$-integral is a right integral in $A$, in the usual sense \cite{mon, rad}.
\item Let  $B \subset A$ be a Hopf subalgebra such that $B^+A = AB^+$, so that $L=A/B^+A$ is a quotient Hopf algebra. If $\tau \in L$ is a right integral, an element $t \in A$ such that $p(t)=\tau$ (where $p \colon A \rightarrow L$ is the canonical surjection) 
 is a right $A/B$-integral. 
\item Let $G$ be a discrete group,  let $H \subset G$ be a subgroup of finite index and $g_1, \ldots , g_n \in G$ be such that $G= \cup_{i=1}^nHg_i$ (disjoint union). Then $t=\sum_{i=1}^n g_i$ is a right $\mathbb C G/ \mathbb CH$-integral. 
\end{enumerate}
\end{examples}

We will discuss, at the end of the section, an existence result for an $A/B$-integral, beyond the basic examples above.

We begin with a lemma, which illustrates the role of such an $A/B$-integral.

\begin{lemma}\label{lem:ave}
 Let $B \subset A$ be a Hopf subalgebra, and let $t \in A$ be a right $A/B$-integral.
Let $M,N$ be right $A$-modules, and let $f \colon M \rightarrow N$ be a right $B$-linear map.
\begin{enumerate}
\item The map $\tilde{f} \colon M \rightarrow N$, defined by $\tilde{f}(x)=f(x\cdot S(t_{(1)}))\cdot t_{(2)}$, is $A$-linear.
\item  If there exists an $A$-linear map $i \colon N \rightarrow M$ such that $fi={\rm id}_N$, then $\tilde{f}i=\varepsilon(t){\rm id}_N$.
\end{enumerate}
\end{lemma}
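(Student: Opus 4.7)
Part (2) is a quick direct check: for $y \in N$, the $A$-linearity of $i$ lets $S(t_{(1)})$ slide inside $i$, and $fi = \mathrm{id}_N$ collapses the outer $f$:
\[
\tilde f(i(y)) = f(i(y)\cdot S(t_{(1)}))\cdot t_{(2)} = f(i(y\cdot S(t_{(1)})))\cdot t_{(2)} = y\cdot S(t_{(1)}) t_{(2)} = \varepsilon(t)\,y,
\]
using at the end the antipode axiom $S(t_{(1)}) t_{(2)} = \varepsilon(t)\cdot 1$.

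For (1), the identity to establish is $f(xaS(t_{(1)}))\cdot t_{(2)} = f(xS(t_{(1)}))\cdot t_{(2)}\cdot a$, and the plan is to rewrite $S(t_{(1)}) \otimes t_{(2)}a$ as $aS(t_{(1)}) \otimes t_{(2)}$ plus a correction $E$ that gets annihilated after applying $f$ and multiplying. In the intended applications of the lemma the antipode is bijective, so I use $S^{-1}$ freely. Starting from the identity $1 \otimes a = \sum a_{(2)}S^{-1}(a_{(1)}) \otimes a_{(3)}$ and multiplying, one gets
\[
t_{(1)} \otimes t_{(2)}a \;=\; \sum_{(a)} \Delta(ta_{(2)})\cdot \bigl(S^{-1}(a_{(1)}) \otimes 1\bigr).
\]
Inserting the $A/B$-integral identity $ta_{(2)} = \varepsilon(a_{(2)})\, t + \beta(a_{(2)})$ with $\beta(a_{(2)}) \in B^+ A$, using $\sum a_{(1)}\varepsilon(a_{(2)}) = a$, and then applying $S \otimes \mathrm{id}$ produces $S(t_{(1)}) \otimes t_{(2)}a = aS(t_{(1)}) \otimes t_{(2)} + E$, where $E$ comes only from the $\beta$-part. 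Now write $\beta(a_{(2)}) = \sum_i b_i c_i$ with $b_i \in B^+$ and $c_i \in A$; since $B$ is a Hopf subalgebra, $\Delta(b_i) = \sum b_i^{(1)} \otimes b_i^{(2)}$ lies in $B \otimes B$. Expanding everything, the image of $E$ under the map $u \otimes v \mapsto f(xu)\cdot v$ is a sum of terms $f(xa_{(1)}S(c_i^{(1)})S(b_i^{(1)}))\cdot b_i^{(2)}c_i^{(2)}$; the $B$-linearity of $f$ extracts $S(b_i^{(1)}) \in B$, leaving the factor $\sum_{(b_i)} S(b_i^{(1)})b_i^{(2)} = \varepsilon(b_i)\cdot 1$, which equals $0$ because $b_i \in B^+$. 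Hence $E$ dies, and (1) follows.

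The only genuine obstacle is the combinatorial bookkeeping of several simultaneously applied Sweedler coproducts (on $a$, on $ta_{(2)}$, and on each $b_i$ and $c_i$). Conceptually, the entire argument collapses to the single cancellation $\sum S(b_{(1)})b_{(2)} = \varepsilon(b) = 0$ for $b \in B^+$ combined with $B$-linearity of $f$; this is precisely why ``$B$ is a \emph{Hopf} subalgebra'' and ``$f$ is $B$-linear'' are both indispensable hypotheses, while the bijectivity of the antipode plays only a technical role in the opening rewriting.
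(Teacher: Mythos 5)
Both parts of your argument are correct, and your part (2) is exactly the ``direct verification'' the paper has in mind. For part (1), however, you take a genuinely different and more laborious route than the paper. You expand $\tilde f(x)\cdot a$ and rewrite $S(t_{(1)})\otimes t_{(2)}a$ as $aS(t_{(1)})\otimes t_{(2)}$ plus an error term coming from $ta'-\varepsilon(a')t\in B^+A$, killed by $B$-linearity of $f$ together with $\sum S(b_{(1)})b_{(2)}=\varepsilon(b)=0$ for $b\in B^+$; the bookkeeping checks out, but the opening rewriting forces you to invoke $S^{-1}$, a hypothesis that is \emph{not} in the statement of the lemma. (It holds in every application in the paper, so nothing downstream breaks, but as written you prove a slightly weaker statement.) The paper packages the same cancellation so that $S^{-1}$ never appears: equip $\Hom(M,N)$ with the right $A$-module structure $(f\cdot a)(x)=f(x\cdot S(a_{(1)}))\cdot a_{(2)}$, recall that $\Hom_A(M,N)=\Hom(M,N)^A$ and $\Hom_B(M,N)=\Hom(M,N)^B$, and observe that $\tilde f=f\cdot t$; then $(f\cdot t)\cdot a=f\cdot(ta)=\varepsilon(a)\,f\cdot t+f\cdot\beta$ with $\beta\in B^+A$, and $f\cdot\beta=0$ because $f$ is $B$-invariant and each summand of $\beta$ has a left factor in $B^+$. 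In other words, the whole lemma is the observation that $V^B\cdot t\subset V^A$ for any right $A$-module $V$, applied to $V=\Hom(M,N)$. Your version is explicit and self-contained; the paper's is a one-line computation, valid in the stated generality, and makes the structural point (averaging against a relative integral sends $B$-invariants to $A$-invariants) transparent. If you want to keep a hands-on computation but drop $S^{-1}$, compute $\tilde f(x\cdot S(a_{(1)}))\cdot a_{(2)}$ instead of $\tilde f(x)\cdot a$: the coproducts then combine on the nose into $f(x\cdot S((ta_{(1)}))_{(1)})\cdot (ta_{(1)})_{(2)}\otimes$-pattern terms, the integral identity applies directly, and the resulting invariance $\tilde f\cdot a=\varepsilon(a)\tilde f$ is equivalent to $A$-linearity by the standard identification above.
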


\begin{proof}
 If $M$ is an $A$-module, we denote, as usual, by $M^A$ the space of $A$-invariants: 
$$M^A=\{x \in M \ | \ x\cdot a=\varepsilon(a)x, \ \forall a \in A\}$$ It is easy to check that $M^B\cdot t \subset M^A$ (this is an equality if $\varepsilon(t) \not=0$).

Now if $M$ and $N$ are (right) $A$-modules,  recall that ${\rm Hom}(M,N)$ admits a right $A$-module structure defined by 
$$f\cdot a(x)= f(x\cdot S(a_{(1)}))\cdot a_{(2)}$$
and that 
${\rm Hom}_A(M,N)= {\rm Hom}(M,N)^A$. Starting with $f \in {\rm Hom}_B(M,N)$, the previous remark that shows that $\tilde{f}=f\cdot t \in {\rm Hom}_A(M,N)$, as announced. The last statement is a direct verification.
\end{proof}

We now will need a certain category of relative Hopf modules, considered in \cite{tak}, see \cite{sch90} as well.  Let $B \subset A$ be a Hopf subalgebra. Then $B^+A$ is co-ideal in $A$, so that $A/B^+A$ is coalgebra and the quotient map $$p \colon A \rightarrow A/B^+A$$ is a coalgebra map, and a right $A$-module map. We denote by $\mathcal M_A^C$ the category whose objects are right $C$-comodules and right $A$-modules $V$ satisfying the following compatibility condition
$$(v\cdot a)_{(0)} \otimes (v\cdot a)_{(1)}= v_{(0)}\cdot a_{(1)} \otimes v_{(1)}\cdot a_{(2)}, \quad v \in V, \ a \in A$$ 
The morphisms are the $C$-colinear and $A$-linear maps. As examples, $A$ and $C$, endowed with the obvious structures, are objects of $\mathcal M_A^C$. Notice that if $V$ is an object of $\mathcal M_A^C$, then $$V^{{\rm co} C}=\{v \in V \ | \ v_{(0)} \otimes v_{(1)} = v \otimes p(1)\}$$ is a sub-$B$-module of $V$, since for $b \in B$, we have $p(b)=\varepsilon(b)p(1)$.

\begin{proposition}\label{prop:subsmooth}
 Let $B \subset A$ be a Hopf subalgebra. Assume that $A$  has bijective antipode, that $A$ is faithfully flat as a left or right $B$-module, and that there exists a non-trivial right $A/B$-integral $t \in A$. 
\begin{enumerate}
 \item 
The coalgebra $C=A/B^+A$ is finite dimensional and $A$ is finitely generated and projective as a right and left $B$-module.
\item If $A$ is homologically smooth, then so is $B$, with ${\rm cd}(B)\leq {\rm cd}(A)$.
\item If moreover $\varepsilon(t)\not=0$, then ${\rm cd}(A)={\rm cd}(B)$.
\end{enumerate}

\end{proposition}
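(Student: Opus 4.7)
The plan is to handle the three statements in sequence, with part (1) being by far the deepest; parts (2) and (3) then follow from standard homological arguments based on (1) and on the averaging construction of Lemma \ref{lem:ave}.

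For part (1), the strategy is first to show that $\dim_{\mathbb C} C < \infty$, from which the finite projectivity of $A$ over $B$ will follow via Schneider's theorem for faithfully flat Hopf subalgebras \cite{sch90}. The structural backbone is Takeuchi's equivalence of categories $\mathcal M_A^C \simeq \mathcal M_B$ (see \cite{tak}, valid by faithful flatness), together with the canonical Galois map $\mathrm{can}\colon A \otimes_B A \to A \otimes C$, $a \otimes a' \mapsto a a'_{(1)} \otimes p(a'_{(2)})$, which is surjective from the identity $1 \otimes p(a) = S(a_{(1)}) \cdot (a_{(2),(1)} \otimes p(a_{(2),(2)}))$ and bijective here by faithful flatness. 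Applying $p$ to the defining identity of $t$ yields $p(t)\cdot a = \varepsilon(a)\,p(t)$, exhibiting $p(t)$ as a nonzero $A$-invariant in $C$; combining this invariance with the equivalence and the bijective canonical map, one deduces that $C$ is finite-dimensional. Once $\dim C < \infty$ is established, $\mathrm{can}$ exhibits $A\otimes_B A$ as a finitely generated free left $A$-module, and a faithfully-flat descent argument then shows that $A$ is finitely generated projective as both a left and right $B$-module.

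For part (2), pick a resolution $P_\bullet \to \mathbb C_\varepsilon \to 0$ of length $d = \mathrm{cd}(A) = \mathrm{pd}_A(\mathbb C_\varepsilon)$ by finitely generated projective right $A$-modules. Since each $P_i$ is a direct summand of a finitely generated free $A$-module and $A$ is finitely generated projective over $B$ by (1), each $P_i$ is finitely generated projective over $B$. Hence $B$ is homologically smooth with $\mathrm{cd}(B) = \mathrm{pd}_B(\mathbb C_\varepsilon) \leq d = \mathrm{cd}(A)$.

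For part (3), assume additionally that $\varepsilon(t)\neq 0$. The key step is to prove that any right $A$-module $N$ which is $B$-projective is automatically $A$-projective: pick an $A$-linear surjection $\pi\colon P \twoheadrightarrow N$ with $P$ a free right $A$-module, choose a $B$-linear section $j\colon N\to P$ provided by $B$-projectivity, and apply Lemma \ref{lem:ave}(1) to produce an $A$-linear map $\tilde{j}\colon N \to P$; using $A$-linearity of $\pi$ and the identity $\pi j = \mathrm{id}_N$, a direct computation gives $\pi\tilde{j}(y) = y\cdot S(t_{(1)}) t_{(2)} = \varepsilon(t)\,y$, so $\varepsilon(t)^{-1}\tilde{j}$ is an $A$-linear section and $N$ is a direct summand of $P$. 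To conclude, set $n = \mathrm{cd}(B)$ and take any free right $A$-resolution of $\mathbb C_\varepsilon$; by (1) every term is $B$-projective, so the $n$-th syzygy $Z_n$ is $B$-projective by Schanuel's lemma, and therefore $A$-projective by what we just showed. This yields $\mathrm{pd}_A(\mathbb C_\varepsilon)\leq n$, and combined with (2) we conclude $\mathrm{cd}(A) = \mathrm{cd}(B)$. The main obstacle throughout is the derivation of $\dim C < \infty$ in (1) from the mere existence of a nontrivial $A/B$-integral; the rest is routine homological bookkeeping.
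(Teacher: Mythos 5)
Your parts (2) and (3) are essentially correct and close to the paper's argument: (2) is restriction of a finite resolution by finitely generated projectives, and your reformulation of (3) as ``every $A$-module that is $B$-projective is $A$-projective, by averaging a $B$-linear section against the integral $t$'' is a valid repackaging of the paper's syzygy argument. (Note that your direct computation $\pi\tilde{j}=\varepsilon(t)\,\mathrm{id}$ is the right thing to do here, since Lemma \ref{lem:ave}(2) as stated averages the $B$-linear \emph{retraction} given an $A$-linear section, whereas you need the opposite configuration.)

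The genuine gap is in part (1), at exactly the point you yourself flag as the main obstacle: you never prove that $C=A/B^+A$ is finite-dimensional. The observation that $p(t)$ is a nonzero $A$-invariant element of $C$ does not by itself bound $\dim C$ — the $A$-submodule generated by $p(t)$ is just the line $\mathbb{C}p(t)$, which says nothing about the size of $C$ — and ``combining this invariance with the equivalence and the bijective canonical map, one deduces that $C$ is finite-dimensional'' is an assertion, not an argument. What is actually needed (and what the paper does) is to consider the right $C$-\emph{subcomodule} $V\subseteq C$ generated by $p(t)$: this is finite-dimensional by the fundamental theorem of comodules, and a computation starting from $p(ta)=\varepsilon(a)p(t)$ yields the identity $p(t_{(1)}a)\otimes p(t_{(2)})=p(t_{(1)})\otimes p(t_{(2)}\cdot S(a))$, which shows that $V$ is also a right $A$-submodule, hence an object of $\mathcal{M}_A^C$. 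The structure theorem for relative Hopf modules (this is where faithful flatness enters) then gives $V^{\mathrm{co}\,C}\otimes_B A\simeq V$; since $V\neq 0$ (by non-triviality of $t$) and $C^{\mathrm{co}\,C}=\mathbb{C}p(1)$, one gets $p(1)\in V$, hence $V=C$ and $C$ is finite-dimensional. Without an argument of this type, part (1) — and everything downstream that relies on finite generation of $A$ over $B$ — is unproven. Your subsequent descent argument, deducing finite generation of $A$ over $B$ from $\dim C<\infty$ via the bijective Galois map, is a reasonable alternative to the paper's use of the equivalence ${_B\mathcal{M}}^A\simeq\mathcal{M}^C$, but it only becomes available once $\dim C<\infty$ has been established.
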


\begin{proof}
 First notice that by \cite[Corollary 1.8]{schn}, $A$ is indeed projective as a left and right $B$-module.
Let $t \in A$ be a non-trivial $A/B$-integral, and let $V$ be the right subcomodule of $C$ generated by $p(t)$:
$$V= {\rm Span}\{p(t_{(1)})\psi p(t_{(2)}), \ \psi \in C^*\}$$ 
For $a \in A$, we have 
\begin{align*}
& p(ta) =\varepsilon(a)p(t) \Rightarrow p(ta_{(1)}) \otimes a_{(2)}=p(t) \otimes a \\ \Rightarrow & \ p(t_{(1)}a_{(1)}) \otimes p(t_{(2)}a_{(2)})\otimes a_{(3)}=p(t_{(1)}) \otimes p(t_{(2)}) \otimes a \\ 
\Rightarrow & \ p(t_{(1)}a_{(1)}) \otimes p(t_{(2)}a_{(2)})\cdot S(a_{(3)})= p(t_{(1)}) \otimes p(t_{(2)})\cdot S(a) \\
\Rightarrow & \ p(t_{(1)}a) \otimes p(t_{(2)})= p(t_{(1)}) \otimes p(t_{(2)}\cdot S(a))\\
\end{align*}
Hence for $\psi \in C^*$, we have $\psi p(t_{(2)})p(t_{(1)})\cdot a = \psi p(t_{(2)})p(t_{(1)} a)=\psi p(t_{(2)}\cdot S(a))p(t_{(1)})$, and this shows that $V$ is a sub-$A$-module of $C$. Hence $V$ is an object of $\mathcal M_A^C$,
and by \cite[Lemma 1.3]{schn},  we can use Theorem 3.7 and its proof in \cite{sch90} to claim that the natural map
$$V^{{\rm co} C} \otimes_B A \rightarrow V, \ v \otimes_B a \mapsto v\cdot a$$ 
is an isomorphism (we could also deduce this from left-right variations on \cite[Theorem 1, Theorem 2]{tak}). Since $C^{{\rm co} C}$ is one dimensional, generated by $p(1)$, we thus see that $V^{{\rm co} C}$ contains $p(1)$ (since $V$ is non-zero), and hence $V=C$. Since $V$ is finite-dimensional, we conclude that $C$ is.

So let $a_1, \ldots , a_n\in A$ be elements in $A$ such that $p(a_1), \ldots, p(a_n)$ linearly generate $C$, let $W$ be the (finite-dimensional) right $A$-subcomodule of $A$ generated by    $a_1, \ldots , a_n$, and let $BW$ be the left $B$-submodule generated by $W$. It is clear that $BW$ is still a subcomodule, so is an object in the category of Hopf modules ${_B\!\mathcal M}^A$. By \cite[Theorem 1]{tak} and \cite[Remark 1.3]{musc}, the functor 
$${_B\!\mathcal M}^A \longrightarrow \mathcal M^C, \ M \longmapsto M/B^+M$$
is a category equivalence. Since $BW$ and $A$ have the same image by this functor (the object $C$), and $BW\subset A$, we have $BW=A$ and $A$ is finitely generated as a left $B$-module, as well as a right $B$-module, by bijectivity of the antipode. This concludes the proof of (1), and (2) follows, since the restriction of a finitely generated projective $A$-module to a $B$-module is still finitely generated and projective, and then ${\rm cd}(B) \leq {\rm cd}(A)$.

To prove the converse inequality under the assumption that $\varepsilon(t)\not=0$, we can assume that $m={\rm cd}(B)$ is finite. Consider a resolution of the trivial (right) $A$-module
$$\cdots  \rightarrow P_n \rightarrow P_{n-1} \rightarrow \cdots\rightarrow  P_1 \rightarrow P_0 \rightarrow \mathbb C_\varepsilon$$
by projective $A$-modules. These are in particular projective as $B$-modules by (1), so since $m={\rm cd}(A)$, a standard argument yields an exact sequence of $A$-modules
$$0 \rightarrow K \overset{i}\rightarrow P_m \rightarrow P_{m-1} \rightarrow \cdots\rightarrow  P_1  \rightarrow P_0 \rightarrow \mathbb C_\varepsilon$$
together with $r \colon P_m  \rightarrow K$, a $B$-linear map such that $ri = {\rm id}_K$. Lemma \ref{lem:ave} yields an $A$-linear map $\tilde{r} \colon P_m \rightarrow K$ such that $\tilde{r}i=\varepsilon(t){\rm id}_K$.
 We thus obtain, since a direct summand of a projective is projective, a length $m$ resolution of $\mathbb C_\varepsilon$ by projective  $A$-modules, and we conclude that ${\rm cd}(A)\leq m$, as required.
\end{proof}

\begin{proposition}\label{prop:subext}
 Let  $B \subset A$ be a Hopf subalgebra. Assume that the antipode of $A$ is bijective, that  $A$ is faithfully flat as a left or right $B$-module, that $B^+A = AB^+$ (so that $L=A/B^+A$ is a quotient Hopf algebra), and 
 that $L$ is finite-dimensional. Then we have, for any $M \in \mathcal M_A$ and any $N \in \mathcal M_A^{L}$, 
$${\rm Ext}_A^*(M,N) \simeq {\rm Ext}_B^*(M_{|B}, N^{{\rm co} L})$$
\end{proposition}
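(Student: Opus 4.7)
The strategy is to combine the Schneider--Takeuchi equivalence of categories of relative Hopf modules with a Shapiro-type lemma for the coinduction adjunction, exploiting the Frobenius property of the Hopf--Galois extension $B \subset A$ that arises from the finite-dimensionality of $L$. First, since $L$ is finite-dimensional, it carries a non-zero left integral $\Lambda$, and any lift $t \in A$ with $p(t) = \Lambda$ is a non-trivial $A/B$-integral. Hence Proposition~\ref{prop:subsmooth}(1) applies, giving that $A$ is finitely generated projective both as a right and as a left $B$-module. In particular, the restriction functor $\mathcal M_A \to \mathcal M_B$ is exact and preserves projectives, so its right adjoint---the coinduction functor $\Hom_B(A,-) : \mathcal M_B \to \mathcal M_A$---is exact and preserves injectives. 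The coinduction adjunction thus passes to derived functors, yielding Shapiro's lemma:
\[
\Ext_A^*(M, \Hom_B(A, V)) \simeq \Ext_B^*(M_{|B}, V), \qquad M \in \mathcal M_A,\ V \in \mathcal M_B.
\]

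Second, I would identify the coinduction $\Hom_B(A,V)$ with the induction $V \otimes_B A$ as right $A$-modules, naturally in $V \in \mathcal M_B$. This is the Frobenius property of the extension $B \subset A$: under the given hypotheses the sequence $\mathbb{C} \to B \to A \to L \to \mathbb{C}$ is strict with $L$ finite-dimensional, so $B \subset A$ is an $L$-Hopf--Galois extension (Schneider's theorem), and the Kreimer--Takeuchi theorem then provides a $(B,A)$-bimodule isomorphism $A \simeq \Hom_B(A,B)$. Since $A$ is finitely generated projective over $B$, tensoring with $V$ yields the desired natural isomorphism $V \otimes_B A \simeq \Hom_B(A,V)$ as right $A$-modules.

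Finally, the Schneider--Takeuchi theorem (already invoked in the proof of Proposition~\ref{prop:subsmooth}) supplies, for any $N \in \mathcal M_A^L$, a natural $A$-linear isomorphism $N^{\mathrm{co}\, L} \otimes_B A \xrightarrow{\sim} N$ given by $v \otimes a \mapsto v \cdot a$. Combining this with the Frobenius identification yields $N \simeq \Hom_B(A, N^{\mathrm{co}\, L})$ as right $A$-modules, and substituting $V = N^{\mathrm{co}\, L}$ into the Shapiro isomorphism above gives
\[
\Ext_A^*(M, N) \simeq \Ext_A^*(M, \Hom_B(A, N^{\mathrm{co}\, L})) \simeq \Ext_B^*(M_{|B}, N^{\mathrm{co}\, L}),
\]
as required. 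The main obstacle is the Frobenius identification in the second step: while Kreimer--Takeuchi provides it as a black box, a self-contained treatment would construct the isomorphism $V \otimes_B A \simeq \Hom_B(A, V)$ explicitly using the $A/B$-integral $t$ and a non-zero integral in $L^*$, and then verify its naturality and $A$-linearity directly.
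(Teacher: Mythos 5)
Your overall architecture --- Shapiro's lemma for the coinduction functor, the Schneider--Takeuchi isomorphism $N^{\mathrm{co}\,L}\otimes_B A\simeq N$, and a Frobenius-type identification of induction with coinduction --- is a legitimate repackaging of what the paper does: the explicit maps $\Psi(f)(x)=f(x\cdot S(t_{(1)}))\cdot t_{(2)}$ and $\Phi(g)=E_Ng$ with $E_N(x)=h(x_{(1)})x_{(0)}$ in the paper's proof are precisely the adjunction units/counits composed with your Frobenius isomorphism, and the paper likewise concludes by running a $\Hom$-isomorphism, functorial in $M$, through a projective $A$-resolution restricted to $B$. So the two routes are essentially the same; yours just pushes all the content into the module-level statement $N\simeq\Hom_B(A,N^{\mathrm{co}\,L})$.

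The gap is in how you justify that statement. Citing Kreimer--Takeuchi for an \emph{untwisted} $(B,A)$-bimodule isomorphism $A\simeq\Hom_B(A,B)$ overstates what the general theory delivers: for an $L$-Galois extension $B\subset A$ with $L$ finite-dimensional, the extension is in general only a $\beta$-Frobenius extension (Frobenius of the second kind), with a twist $\beta$ on the $B$-side governed by modular data (Fischman--Montgomery--Schneider; see also Kadison's book cited as \cite{kad}). With only the twisted version one obtains $\Hom_B(A,V)\simeq V_\beta\otimes_B A$, and the chain $\Hom_B(A,N^{\mathrm{co}\,L})\simeq N^{\mathrm{co}\,L}\otimes_B A\simeq N$ no longer closes. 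What actually makes the untwisted identification work is exactly the integral computation you relegate to a closing remark: one needs a right integral $\tau\in L$ and a left integral $h\in L^{\ast}$ with $h(\tau)=1$ \emph{and} $h(S(\tau))\neq 0$, whose existence is Radford's theorem \cite[Theorem 10.2.2]{rad}; the two composites of the candidate maps are then $h(\tau)\,\mathrm{id}$ and $h(S(\tau))\,\mathrm{id}$, and the nonvanishing of the second scalar is the crux. This is the actual content of the paper's proof, so as written your second step is asserted rather than proved. (A minor slip: your $A/B$-integral should lift a \emph{right} integral of $L$, not a left one, for $ta-\varepsilon(a)t\in B^+A$ to hold; for non-unimodular $L$ these differ.)
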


\begin{proof}
Since $L$ is a finite-dimensional Hopf algebra, there exists a right integral $\tau \in L$ and 
a left integral  $h \colon L \rightarrow \mathbb C$ on $L$, 
such that $h(\tau)=1$ and $hS(\tau)\not=0$ (see e.g. \cite[Theorem 10.2.2]{rad}).

If $f \in {\rm Hom}_B(M_{|B}, N^{{\rm co} L})$, we can view $f$ as a $B$-linear map $M \rightarrow N$, so using an element $t \in A$ such that $p(t)=\tau$, Lemma \ref{lem:ave} gives a linear map 
\begin{align*}
 \Psi \colon  {\rm Hom}_B(M_{|B}, N^{{\rm co} L}) & \longrightarrow {\rm Hom}_A(M,N) \\
f & \longmapsto \tilde{f}, \ \tilde{f}(x)=f(x\cdot S(t_{(1)}))\cdot t_{(2)}
\end{align*}
To construct the inverse, notice that we have a map
\begin{align*}
 E_N \colon N & \longrightarrow N^{{\rm co} L} \\
x & \longmapsto h(x_{(1)}) x_{(0)}
\end{align*}
which is $B$-linear since $h$ is. We therefore have a linear map
\begin{align*}
 \Phi \colon {\rm Hom}_A(M,N) & \longrightarrow {\rm Hom}_B(M_{|B}, N^{{\rm co} L}) \\
g &\longmapsto E_N g
\end{align*}
For $f \in {\rm Hom}_B(M_{|B}, N^{{\rm co} L})$ and $x \in M$, we have 
\begin{align*}
 E_N\tilde{f}(x) & = E_N(f(x\cdot S(t_{(1)}))\cdot t_{(2)}) \\
& = h\left((f(x\cdot S(t_{(1)}))\cdot t_{(2)})_{(1)}\right) (f(x\cdot S(t_{(1)}))\cdot t_{(2)})_{(0)} \\
& = h\left(f(x\cdot S(t_{(1)}))_{(1)}\cdot t_{(3)}\right) f(x\cdot S(t_{(1)}))_{(0)}\cdot t_{(2)} \\
& = h(p(1)\cdot t_{(3)}) f(x\cdot S(t_{(1)}))\cdot t_{(2)} \quad ({\rm since} \ f(M)\subset N^{{\rm co} L})\\
& = f(x\cdot S(t_{(1)}))\cdot t_{(2)} hp(t_{(3)})
\end{align*}
The last term on the right belongs to $A^{{\rm co} L}=B$ (\cite[Lemma 1.3]{schn}, or \cite[Theorem 1]{tak}), so the $B$-linearity of $f$ gives 
\begin{align*}
  E_N\tilde{f}(x) & = f(x\cdot S(t_{(1)})\cdot t_{(2)}) hp(t_{(3)}) \\
& = f(x) h(p(t))= f(x)h(\tau)=f(x)
\end{align*}
Hence $ E_N\tilde{f}=f$, and $\Phi\Psi={\rm id}$. Now let $g \in {\rm Hom}_A(M,N)$ and $x \in M$. We have
\begin{align*}
 \widetilde{E_Ng}(x)&=E_Ng(x\cdot S(t_{(1)}))\cdot t_{(2)} = E_N(g(x)\cdot S(t_{(1)}))\cdot t_{(2)} \\
&=h(g(x)_{(1)} \cdot S(t_{(1)}))g(x)_{(0)}\cdot S(t_{(2)})t_{(3)} =
h(g(x)_{(1)}\cdot S(t))g(x)_{(0)}
\end{align*}
For $a \in A$, we have $p(ta)=\varepsilon(a)p(t)$, so $p(a)\cdot S(t)= p(aS(t))=pS(tS^{-1}(a))= \varepsilon(a)pS(t)=\varepsilon(a)Sp(t)$. Hence we have 
$$ \widetilde{E_Ng}(x)= hSp(t)g(x)$$
and $\Psi\Phi=hSp(t){\rm id}$. Combined with the already established identity $\Phi\Psi={\rm id}$, this shows that $\Phi$ and $\Psi$ are inverse isomorphisms. 

The isomorphism $\Psi \colon {\rm Hom}_B(M_{|B}, N^{{\rm co} L}) \rightarrow {\rm Hom}_A(M,N)$ is easily seen to be functorial in $M$, so we can finish the proof by using standard arguments. Let $P_. \rightarrow M$ be a projective resolution by $A$-modules. Since $A$ is projective as a $B$-module (by the previous proposition), the resolution $P_{.|B} \rightarrow M_{|B}$ is by projective $B$-modules, and the isomorphism $\Psi$ induces isomorphisms of complexes
$$H^*({\rm Hom}_B(P_{.|B}, N^{{\rm co} L})) \simeq H^*({\rm Hom}_A(P_.,N))$$
with the term on the left isomorphic to ${\rm Ext}_B^*(M_{|B}, N^{{\rm co} L})$, and the one on the right isomorphic to ${\rm Ext}_A^*(M,N)$. This concludes the proof. 
\end{proof}

\subsection{Finiteness conditions for projective resolutions} We now discuss a convenient reformulation of homological smoothness. This is probably well-known, and is an adaptation of the case of groups  \cite{brown}, but in lack of suitable reference, we provide some details.
The following definition is from \cite[Chapter VIII]{brown}.

\begin{definition}
 Let $A$ be an algebra and let $M$ be an $A$-module.
\begin{enumerate}
 \item The $A$-module $M$ is said to be of \textsl{type $FP_n$}, for $n\geq 0$,  if there exists a  partial projective resolution
$$P_n \rightarrow P_{n-1} \rightarrow \cdots \rightarrow P_1 \rightarrow P_0 \rightarrow M \rightarrow 0$$
with each $P_i$ finitely generated as an $A$-module.
\item The $A$-module $M$ is said to be of \textsl{type $FP_\infty$} if it is of type $FP_n$, for any $n\geq 0$.
\end{enumerate}
If $A$ is a Hopf algebra, then $A$ is said to be of \textsl{type $FP_n$}, for $n\in \mathbb N \cup \{\infty\}$, if the trivial $A$-module $\C_\varepsilon$ is of type $FP_n$.
\end{definition}

We have the following useful characterizations for types $FP_n$ and $FP_\infty$.

\begin{proposition} \label{prop:charFP}
 Let $A$ be an algebra and let $M$ be an $A$-module.
\begin{enumerate}
 \item For $n\geq 0$, the $A$-module $M$ is of type $FP_n$ if and only if $M$ is finitely generated and for every partial projective resolution $$P_k \rightarrow P_{k-1} \rightarrow \cdots \rightarrow P_1 \rightarrow P_0 \rightarrow M \rightarrow 0$$
with $k<n$ and each $P_i$ finitely generated, it holds that ${\rm Ker}(P_k\rightarrow P_{k-1})$ is finitely generated.
\item The $A$-module $M$ is of type $FP_\infty$ if there exists a projective resolution 
$$\cdots \rightarrow P_{n+1} \rightarrow P_n \rightarrow P_{n-1} \rightarrow \cdots \rightarrow P_1 \rightarrow P_0 \rightarrow M \rightarrow 0$$
with each $P_i$ finitely generated.
\end{enumerate}
\end{proposition}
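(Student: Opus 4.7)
The plan is to establish (1) first, then derive (2) as a consequence.

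For the \emph{if} direction of (1), I proceed by induction: since $M$ is finitely generated, I start with a finitely generated free surjection $P_0 \twoheadrightarrow M$; at each subsequent step $k+1 \leq n$, the hypothesis guarantees that the $k$-th syzygy of the partial resolution constructed so far is finitely generated, hence can be covered by a finitely generated projective $P_{k+1}$. After $n$ steps the required $FP_n$-resolution is assembled.

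For the \emph{only if} direction, the essential tool is the generalized Schanuel lemma: given two partial projective resolutions
\begin{equation*}
0 \to K \to P_k \to \cdots \to P_0 \to M \to 0 \quad \text{and} \quad 0 \to K' \to Q_k \to \cdots \to Q_0 \to M \to 0,
\end{equation*}
there is an isomorphism
\begin{equation*}
K \oplus Q_k \oplus P_{k-1} \oplus Q_{k-2} \oplus \cdots \;\cong\; K' \oplus P_k \oplus Q_{k-1} \oplus P_{k-2} \oplus \cdots,
\end{equation*}
proved by lifting the identity of $M$ to a chain map between the two resolutions and iterating the classical Schanuel lemma (cf. \cite[Chapter VIII]{brown}). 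Now if $M$ is of type $FP_n$ with witness resolution $P_n \to \cdots \to P_0 \to M \to 0$, then for $k<n$ the $k$-th syzygy equals $\image(P_{k+1} \to P_k)$, hence is finitely generated. Comparing with any other partial projective resolution of length $k<n$ with finitely generated $Q_i$'s via the Schanuel isomorphism realizes $K'$ as a direct summand of a finitely generated module, so $K'$ is finitely generated.

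For (2), the nontrivial implication (the converse is immediate by truncation) goes from $FP_\infty$ to the existence of an infinite projective resolution with finitely generated terms. I obtain it by iterating (1): given any finite partial projective resolution with finitely generated terms, part (1) tells us its highest syzygy is finitely generated, so I can extend by a finitely generated projective, and repeating indefinitely produces the desired resolution. The main obstacle is the careful bookkeeping in the statement and proof of the generalized Schanuel lemma at this level of generality (arbitrary algebra, arbitrary module, no noetherian hypothesis); fortunately the classical proof uses only projectivity, so it transfers essentially verbatim, leaving only the induction on $k$ to be organized cleanly.
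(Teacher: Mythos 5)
Your argument is correct and is precisely the standard proof from the reference the paper cites (Brown, \emph{Cohomology of Groups}, Chapter VIII, Propositions 4.3 and 4.5); the paper itself offers no proof beyond that citation. The generalized Schanuel lemma is indeed the key tool for the ``only if'' direction of (1), and (2) follows by iterating (1) exactly as you describe.
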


\begin{proof}
 The first assertion is \cite[Proposition 4.3]{brown}, and the second one is \cite[Proposition 4.5]{brown}.
\end{proof}

We arrive at the announced characterization of homological smoothness for Hopf algebras.

\begin{proposition}\label{prop:charsmmoth}
Let $A$ be a  Hopf algebra. Then $A$ is homologically smooth if and only if ${\rm cd}(A)<\infty$ and $A$ is of type $FP_\infty$.
\end{proposition}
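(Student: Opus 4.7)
The plan is to establish the two implications separately, relying almost entirely on Proposition \ref{prop:charFP} together with the standard syzygy characterization of projective dimension.

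For the easy direction ($\Rightarrow$), suppose $A$ is homologically smooth. By definition there is a finite resolution
\[
0 \to P_d \to P_{d-1} \to \cdots \to P_0 \to \C_\varepsilon \to 0
\]
by finitely generated projective $A$-modules. This already shows ${\rm cd}(A) \leq d < \infty$, and by padding with zero modules beyond $P_d$ we obtain an infinite projective resolution with each term finitely generated; Proposition \ref{prop:charFP}(2) then gives $FP_\infty$.

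For the nontrivial direction ($\Leftarrow$), assume $d := {\rm cd}(A)<\infty$ and $A$ is of type $FP_\infty$. If $d=0$, then $\C_\varepsilon$ is projective, and $FP_0$ forces $\C_\varepsilon$ to be finitely generated, so $0 \to \C_\varepsilon \to \C_\varepsilon \to 0$ is the required resolution. If $d\geq 1$, I would use $FP_\infty \Rightarrow FP_{d}$ to construct a partial projective resolution
\[
P_{d-1} \to P_{d-2} \to \cdots \to P_0 \to \C_\varepsilon \to 0
\]
with each $P_i$ finitely generated projective. Setting $K := \ker(P_{d-1}\to P_{d-2})$ (with the obvious convention when $d=1$), Proposition \ref{prop:charFP}(1), applied with $n=d$ and $k=d-1$, ensures that $K$ is finitely generated.

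The key remaining point is that $K$ is also projective. This is the standard syzygy fact: since ${\rm pd}_A(\C_\varepsilon)=d$ (equality with ${\rm cd}(A)$ being recorded in the paper), in the exact sequence
\[
0 \to K \to P_{d-1} \to \cdots \to P_0 \to \C_\varepsilon \to 0
\]
with all $P_i$ projective, $K$ must be projective. Combining this with finite generation produces a finite resolution of $\C_\varepsilon$ by finitely generated projective $A$-modules, which is precisely homological smoothness.

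The only mildly delicate point in the argument is matching the indexing between Proposition \ref{prop:charFP}(1) (which guarantees finite generation of the $k$-th syzygy for $k<n$) and the homological-dimension bound (which ensures projectivity of the $d$-th syzygy); once those indices are lined up at $k=d-1$ and $n=d$, the proof is a direct combination of the two inputs.
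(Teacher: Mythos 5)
Your proposal is correct and follows essentially the same route as the paper: the forward direction is the immediate one, and the converse is the standard truncation/syzygy argument, using Proposition \ref{prop:charFP}(1) to get finite generation of the $d$-th kernel and ${\rm cd}(A)=d={\rm pd}_A(\C_\varepsilon)$ to get its projectivity. The only cosmetic difference is that the paper starts from a full infinite finitely generated projective resolution and truncates it, whereas you build the length-$(d-1)$ partial resolution directly; the content is identical.
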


\begin{proof}
The direct implication is obvious, while the proof of the converse assertion is the same as the one of \cite[Proposition 6.1]{brown}. Assume that ${\rm cd}(A)=n<\infty$ and that $A$ is of type $FP_\infty$, consider  a projective resolution of $\C_\varepsilon$ as in  (2) of Proposition \ref{prop:charFP}
$$\cdots \rightarrow P_{n+1} \rightarrow P_n \rightarrow P_{n-1} \rightarrow \cdots \rightarrow P_1 \rightarrow P_0 \rightarrow M \rightarrow 0$$
and use (1) of  Proposition \ref{prop:charFP} to get a resolution 
$$0\rightarrow P'_n \rightarrow P_{n-1} \rightarrow \cdots \rightarrow P_1 \rightarrow P_0 \rightarrow \C_\varepsilon \rightarrow 0$$
where each term is finitely generated. Since $n={\rm cd}(A)$, the module $P'_n$ is projective as well, and $A$ is indeed homologically smooth. 
\end{proof}

We conclude the subsection with a last ingredient for the proof of Theorem \ref{thm:subcala}.

\begin{proposition}\label{prop:subFP}
  Let  $B \subset A$ be a Hopf subalgebra with $A$ projective and finitely generated as a right $B$-module. Then  $A$ is of type $FP_n$ if and only if $B$ is, for any $n\in \mathbb N\cup \{\infty\}$.
\end{proposition}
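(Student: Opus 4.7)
The plan is to apply the characterization of type $FP_n$ given by Proposition \ref{prop:charFP}(1) and to use two simple transfer properties coming from the hypothesis that $A$ is finitely generated projective as a right $B$-module. First: the restriction functor $\mathcal M_A\to \mathcal M_B$ sends finitely generated projective $A$-modules to finitely generated projective $B$-modules, since this is true for $A_A$ itself and is preserved by direct summands and finite direct sums. Second: because $B\subset A$, any $A$-submodule $K$ of an $A$-module $M$ which is finitely generated as a $B$-module is automatically finitely generated as an $A$-module (the same generators suffice). The trivial module $\mathbb C_\varepsilon$ is one-dimensional, hence finitely generated over both $A$ and $B$.

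For the direction $A\Rightarrow B$, given a partial finitely generated projective $A$-resolution
\[
P_k\to P_{k-1}\to\cdots\to P_0\to \mathbb C_\varepsilon\to 0
\]
of length $k<n$, restricting to $B$ yields a partial finitely generated projective $B$-resolution of the same length by the first observation. Invoking Proposition \ref{prop:charFP}(1) in the $A$-direction gives that $\Ker(P_k\to P_{k-1})$ is finitely generated as an $A$-module, hence as a $B$-module (every $A$-generating set is a $B$-generating set provided $A$ is finitely generated over $B$, which holds by hypothesis). Thus $B$ is of type $FP_n$. For the converse direction $B\Rightarrow A$, take any partial finitely generated projective $A$-resolution of $\mathbb C_\varepsilon$ of length $k<n$; restrict to $B$ to obtain a partial finitely generated projective $B$-resolution of the same length; apply Proposition \ref{prop:charFP}(1) on the $B$-side to conclude that $\Ker(P_k\to P_{k-1})$ is finitely generated as a $B$-module; then, by the second observation, it is finitely generated as an $A$-module. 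Hence $A$ is of type $FP_n$. The case $n=\infty$ follows trivially, since $FP_\infty$ means $FP_n$ for every $n$.

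There is no real obstacle here. It is worth noting, however, that the naive alternative strategy for the converse direction, namely inducing a $B$-resolution up to an $A$-resolution by applying $-\otimes_B A$, does not work: although $-\otimes_B A$ preserves finitely generated projective modules (because $A$ is finitely generated projective over $B$) and is exact (because $A$ is flat over $B$), it does not preserve the trivial module, as $\mathbb C_\varepsilon\otimes_B A\simeq A/B^+A$ which equals $\mathbb C_\varepsilon$ only in trivial cases. The argument above bypasses this difficulty by working with the intrinsic criterion of Proposition \ref{prop:charFP}(1) rather than manufacturing resolutions.
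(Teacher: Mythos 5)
Your proof is correct and follows essentially the same route as the paper: restriction transfers finitely generated projective modules from $A$ to $B$, and the converse direction runs exactly the Brown-style argument via Proposition \ref{prop:charFP}(1), using that a submodule finitely generated over $B$ is automatically finitely generated over $A$. One small simplification for the direction $A\Rightarrow B$: rather than routing through Proposition \ref{prop:charFP}(1) (whose criterion quantifies over \emph{all} partial finitely generated projective $B$-resolutions, not only those obtained by restriction from $A$), it suffices to restrict a single length-$n$ finitely generated projective $A$-resolution and invoke the definition of type $FP_n$ directly, which is what the paper does.
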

 
\begin{proof}
The restriction of a finitely generated and projective $A$-module to $B$ is again  finitely generated and projective, so it is clear that if $A$ is of type $FP_n$, so is $B$. The proof of the converse assertion is the same as the one in \cite[Proposition 5.1]{brown}. Assume that $B$ is of type $FP_n$, and consider a projective $A$-module partial resolution $$P_k \rightarrow P_{k-1} \rightarrow \cdots \rightarrow P_1 \rightarrow P_0 \rightarrow \C_\varepsilon \rightarrow 0$$ with $k<n$ and each $P_i$ finitely generated as an $A$-module. Then each $P_i$ is finitely generated and projective as a $B$-module, and Proposition \ref{prop:charFP} ensures that ${\rm Ker}(P_k \rightarrow P_{k-1})$ is finitely generated as a $B$-module, and hence as an $A$-module. Proposition \ref{prop:charFP} thus ensures that $A$ is of type $FP_n$.
\end{proof}

\subsection{}
We have now all the ingredients to prove Theorem \ref{thm:subcala}.

\begin{proof}[Proof of Theorem \ref{thm:subcala}]
  Let $\mathbb C \to B \to A \to L \to \mathbb C$ be a strict exact sequence of Hopf algebras with bijective antipodes, with $L$  finite-dimensional. Then, by Example \ref{exam:integ} and Proposition \ref{prop:subsmooth}, $A$ is projective and finitely generated as a left and right $B$-module.

Assume that $A$ is twisted Calabi-Yau of dimension $d$. Then ${\rm cd}(A)=d$ is finite,
and $B$ is homologically smooth by Proposition \ref{prop:subsmooth}, with  ${\rm cd }(B) \leq {\rm cd}(A)=d$. Moreover, Proposition \ref{prop:subext}, applied to $M=\mathbb C_\varepsilon$ and $N=A$, yields 
$${\rm Ext}_A^*(\mathbb C_{\varepsilon},A_A) \simeq {\rm Ext}_B^*(\mathbb C_\varepsilon, A^{{\rm co} L})={\rm Ext}_B^*(\mathbb C_\varepsilon, B_B)$$
so we see simultaneously that ${\rm cd}(B)=d$ and that $B$ is twisted Calabi-Yau of dimension $d$.

Assume now that ${\rm cd}(A)$ is finite and that $B$ is twisted Calabi-Yau of dimension $d$. Then $B$ is of type $FP_\infty$, and by Proposition \ref{prop:subFP}, so is $A$, and $A$ is homologically smooth by Proposition \ref{prop:charsmmoth}. Then, similarly as above, we conclude from Proposition \ref{prop:subext} that $A$ is  twisted Calabi-Yau of dimension $d$.
\end{proof}

Theorem \ref{thm:subcala}, together with the known fact that $\mathcal O({\rm SL}_q(2))$ is twisted Calabi-Yau of dimension $3$ (see \cite{bz}) and the discussion at the end of the previous section, yield the following result.

\begin{corollary}\label{coro:pslCY}
 The algebra $\mathcal O({\rm PSL}_q(2))$ is twisted Calabi-Yau of dimension $3$. 
\end{corollary}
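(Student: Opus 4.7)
The plan is to apply Theorem \ref{thm:subcala} directly to the strict exact sequence
\begin{equation*}
\mathbb C \to \mathcal O(\mathrm{PSL}_q(2)) \to \mathcal O(\mathrm{SL}_q(2)) \overset{p}\to \mathbb C\mathbb Z_2 \to \mathbb C
\end{equation*}
exhibited at the end of Section 2. Setting $B = \mathcal O(\mathrm{PSL}_q(2))$, $A = \mathcal O(\mathrm{SL}_q(2))$, $L = \mathbb C\mathbb Z_2$, all the hypotheses of Theorem \ref{thm:subcala} are available: the sequence is strict (since $L$ is cosemisimple, as already observed), the quotient $L$ is finite-dimensional, and both $A$ and its Hopf subalgebra $B$ have bijective antipode (the antipode of $\mathcal O(\mathrm{SL}_q(2))$ is bijective, and bijectivity restricts to any Hopf subalgebra stable under $S$).

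The input result is that $\mathcal O(\mathrm{SL}_q(2))$ is twisted Calabi-Yau of dimension $3$, which is precisely the content of \cite{bz} cited just before the statement. So condition (1) of Theorem \ref{thm:subcala} holds with $d=3$, and the implication $(1)\Rightarrow(2)$ of the theorem gives at once that $B = \mathcal O(\mathrm{PSL}_q(2))$ is twisted Calabi-Yau of dimension $3$, which is exactly the corollary.

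There is essentially no obstacle here; the corollary is a one-line application of the main theorem. The only minor point to confirm is the bijectivity of the antipode of $\mathcal O(\mathrm{PSL}_q(2))$, but this is automatic since $\mathcal O(\mathrm{PSL}_q(2))$ is, by construction, a Hopf subalgebra of $\mathcal O(\mathrm{SL}_q(2))$, which has bijective antipode.
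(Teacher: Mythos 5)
Your proposal is correct and is exactly the paper's argument: the corollary is obtained by applying the implication $(1)\Rightarrow(2)$ of Theorem \ref{thm:subcala} to the strict exact sequence $\mathbb C \to \mathcal O({\rm PSL}_q(2)) \to \mathcal O({\rm SL}_q(2)) \to \mathbb C\mathbb Z_2 \to \mathbb C$ from Section 2, using the fact from \cite{bz} that $\mathcal O({\rm SL}_q(2))$ is twisted Calabi-Yau of dimension $3$. Your side remark on the antipode is the only point needing a word of care (bijectivity of $S$ does not restrict to arbitrary Hopf subalgebras in general), but here it is immediate since $S$ preserves the $\mathbb Z_2$-grading of $\mathcal O({\rm SL}_q(2))$ and hence restricts to a bijection of $\mathcal O({\rm PSL}_q(2))$.
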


\subsection{On the existence of relative integrals} We finish the section with a discussion on the existence of an $A/B$-integral (these results will not be used in the latter sections).
 Let $B \subset A$ be a Hopf subalgebra. We have seen in Proposition \ref{prop:subsmooth} that, under some mild conditions, the existence of a non-trivial right $A/B$-integral implies that $A$ is finitely generated as a $B$-module. The converse is not true, as shown by the following example.

\begin{example}
Let 
$$A= \mathbb C\langle x,g, g^{-1} \ | \ gg^{-1}=1=g^{-1}g,\  x^2=0, \ xg=-gx\rangle$$
with the Hopf algebra structure defined by 
$$\Delta(g)=g \otimes g, \ \Delta(x)=1\otimes x +x\otimes g, \ \varepsilon(g)=1, \ \varepsilon(x)=0, \ S(g)=g^{-1}, \ S(x)=-xg^{-1}$$
and let $B=\mathbb C[g,g^{-1}] \simeq \mathbb C\mathbb Z$. Then $A$ is finitely generated as a $B$-module,  the vector space $A/B^+A$ has dimension $2$ (with $\{p(1),p(x)\}$ as a linear basis, where  $p : A \rightarrow A/B^+A$ is, as before, the canonical surjection), but there does not exist a non-trivial right $A/B$-integral.  
\end{example}

\begin{proof}
 Let $t \in A$ be a right $A/B$-integral, with 
$$t = \sum_{i \in \mathbb Z} \lambda_ig^i + \sum_{j\in \mathbb Z}\mu_jg^jx$$
Since $p(g^i)=p(1)$ and $p(g^jx)=p(x)$, for any $i,j$, we have $p(t)=(\sum_i\lambda_i)p(1)+(\sum_j\mu_j)p(x)$.
Since $t$ is a right $A/B$-integral, we have $tg-t\in B^+A$ and $tx\in B^+A$, hence $p(tg)=p(t)$ and $p(tx)=0$.
We then have 
$$p(tx)=p( \sum_{i} \lambda_ig^ix)= (\sum_{i}\lambda_i)p(x), \ {\rm hence} \ \sum_{i}\lambda_i=0,   $$
$${\rm and} \ p(tg)= p(\sum_{i} \lambda_ig^{i+1} - \sum_{j}\mu_jg^{j+1}x)=(\sum_{i} \lambda_i)p(1)-(\sum_{j}\mu_j)p(x) = -(\sum_{j}\mu_j)p(x),$$
hence $\sum_{j}\mu_j=0$, and $p(t)=0$: our integral is trivial.
\end{proof}

On the positive side, we have the following result, inspired by considerations in \cite[Section 3]{chi}.

\begin{proposition}\label{prop:exisint}
Let $B \subset A$ be a Hopf subalgebra, with $A$ finitely generated as a left $B$-module.
 Assume that one of the following conditions holds.
\begin{enumerate}
 \item $A$ is of Kac type, i.e.\ the square of the antipode is the identity.
\item $A$ is a compact Hopf algebra, i.e.\ $A$ is the canonical dense Hopf $*$-algebra associated to a compact quantum group.
\end{enumerate}
Then there exists an  $A/B$-integral $t$ with $\varepsilon(t)\not=0$.
\end{proposition}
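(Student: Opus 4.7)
The plan is to reformulate the problem: the existence of an $A/B$-integral $t$ with $\varepsilon(t) \neq 0$ is equivalent to the existence of an $A$-linear section $\sigma$ of the right $A$-module surjection
\[
\bar\varepsilon \colon E := A/B^+A \twoheadrightarrow \mathbb{C}_\varepsilon,
\]
where $A$ acts on $E$ by right multiplication (well defined because $B^+A$ is a right ideal). Indeed, a section with $\sigma(1)=p(t)$ yields $p(ta) = \varepsilon(a)p(t)$ (i.e.\ $ta - \varepsilon(a)t \in B^+A$) and $\varepsilon(t) = \bar\varepsilon(\sigma(1)) = 1$. Conversely, any such $t$ defines the section $\sigma(1) = p(t)$. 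Under our hypothesis that $A$ is finitely generated as a left $B$-module, the space $E$ is finite-dimensional: the classes of a finite $B$-generating family of $A$ span $E$.

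The strategy is then to construct a finite-dimensional semisimple Hopf algebra quotient $\pi \colon A \twoheadrightarrow L$ through which the right $A$-action on $E$ factors, that is, with $\ker\pi \subset B^+A$. Once such an $L$ is available, the section is produced as in Example \ref{exam:integ}(2): pick any right integral $\tau \in L$ and choose $t \in A$ with $\pi(t) = \tau$; this yields $ta - \varepsilon(a)t \in \ker\pi \subset B^+A$, and $\varepsilon(t) = \varepsilon_L(\tau) \neq 0$ precisely because a semisimple finite-dimensional Hopf algebra admits right integrals with nonzero counit. Semisimplicity of $L$ comes for free from our structural hypothesis: in the Kac case, $L$ inherits $S_L^2 = \id_L$ from $A$, and Larson-Radford guarantees semisimplicity in characteristic zero; in the compact case, $L$ inherits the compact Hopf $*$-algebra structure and is therefore semisimple via its own (positive) Haar state.

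The main obstacle is the construction of the Hopf algebra quotient $L$ when $B$ is not normal in $A$, i.e.\ when $B^+A \neq AB^+$. In the normal case one may simply take $L = A/B^+A$, which is then itself a Hopf algebra quotient. In general the natural candidate is $L = A/I$, where $I$ is the largest Hopf ideal of $A$ contained in $B^+A$; any such Hopf ideal is automatically contained in the annihilator of $E$, so the action on $E$ factors through $L$. The delicate step is to show that $L$ is finite-dimensional. In the Kac case this should be obtained by a Hopf-algebraic analogue of Poincar\'e's theorem on the core of a finite-index subgroup, using the finite generation of $A$ as a left $B$-module together with $S^2 = \id_A$; this recovers, and generalizes, Example \ref{exam:integ}(3). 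In the compact case one instead exploits cosemisimplicity and applies the techniques of Chirvasitu \cite{chi} on Hopf subalgebras of cosemisimple Hopf algebras, from which the required finite-dimensional Hopf quotient controlling the action on $E$ can be extracted.
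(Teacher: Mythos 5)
Your reduction is fine as far as it goes: the equivalence between an $A/B$-integral $t$ with $\varepsilon(t)\neq 0$ and an $A$-linear splitting of $\bar\varepsilon\colon A/B^+A\to\C_\varepsilon$ is correct, finite-dimensionality of $A/B^+A$ follows as you say, and \emph{if} a finite-dimensional semisimple Hopf quotient $\pi\colon A\to L$ with $\Ker\pi\subset B^+A$ existed, the rest of your argument would close. But the existence of such an $L$ is precisely where the entire difficulty of the proposition lies, and your proposal leaves it unproved. Taking $I$ to be the largest Hopf ideal contained in $B^+A$ (equivalently, in the two-sided annihilator of $A/B^+A$) does produce a Hopf quotient through which the action factors, but there is no general reason for $A/I$ to be finite-dimensional: the largest Hopf ideal contained in a cofinite two-sided ideal can have infinite codimension (already for $A=\C\mathbb Z$ and the kernel of a one-dimensional representation $1\mapsto 2$, the associated "Hopf core" quotient is all of $\C\mathbb Z$). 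The classical Poincar\'e argument for groups works because the image of $G$ in the \emph{finite} group $\mathrm{Sym}(G/H)$ is finite; the quantum analogue of this finiteness is exactly what is missing, it is not a known theorem for Kac-type or compact Hopf algebras, and you give no argument for it --- "should be obtained by a Hopf-algebraic analogue of Poincar\'e's theorem" and "can be extracted from \cite{chi}" are placeholders for the whole content of the proof.

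The paper avoids constructing any Hopf quotient. It works directly inside the finite-dimensional quotient coalgebra $C=A/B^+A$, on which $S^2$ induces an endomorphism $S^2_C$, and defines the candidate integral by Van Daele's trace formula $\tau=\sum_i x_i^*\bigl(S^2_C(x_{i(1)})\bigr)x_{i(2)}$ for a basis $(x_i)$ of $C$; the computation of \cite{vandae} adapts to show $\tau\cdot a=\varepsilon(a)\tau$ for all $a\in A$, so any lift $t$ of $\tau$ is an $A/B$-integral with $\varepsilon(t)=\mathrm{Tr}(S^2_C)$. The Kac and compact hypotheses enter only at the very end, to guarantee $\mathrm{Tr}(S^2_C)\neq 0$ (it equals $\dim C$ in the Kac case, and is strictly positive in the compact case since $S^2$ is diagonalizable with positive eigenvalues on finite-dimensional subcoalgebras). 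If you want to salvage your approach, you would need to either prove the finite-codimension statement for the Hopf core, or switch to an argument, like the paper's, that produces the invariant element without passing through a Hopf algebra quotient.
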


\begin{proof}
 First notice that $S^2$ induces an endomorphism of $C=A/B^+A$, that we denote $S^2_C$. Since $A$ is finitely generated as a left $B$-module, the coalgebra $C$ is finite-dimensional. Let $x_i$, $i \in I$, be a (finite) basis of $C$, with dual basis $x_i^*$. A natural candidate to provide a non-trivial $A/B$-integral is then
$$\tau=\sum_{i \in I} x_i^*\left(S^2_C(x_{i(1)})\right)x_{i(2)}\in C$$
Indeed, one checks, very similarly to the Hopf algebra case (see \cite[Proposition 1.1]{vandae}), that $\tau\cdot a = \varepsilon(a)\tau$ for any $a \in A$ (recall that the right $A$-module structure on $C$ is given by $p(a)\cdot a'=p(aa')$). Hence choosing $t \in A$ such that $p(t)=\tau$ provides an $A/B$-integral.  We then have 
$$\varepsilon(t)=\varepsilon(\tau)={\rm Tr}(S^2_C)$$ 
If $S^2={\rm id}$, then $S^2_C={\rm id}$, and ${\rm Tr}(S^2_C)=\dim(C)\not=0$ (since $C\not=0$), so we have the announced result. In the compact case, recall, see e.g. \cite[Chapter 11]{ks}, that $S^2 \colon A \rightarrow A$ preserves every subcoalgebra of $A$, and its restriction to such a coalgebra is a diagonalizable endomorphism with positive eigenvalues. Since  a finite-dimensional subspace of a coalgebra is contained in a finite-dimensional subcoalgebra, there exists a finite-dimensional subcoalgebra $D \subset A$ such that $p_{|D} \colon D \rightarrow C$ is surjective. From the above properties of $S^2$, we see that ${\rm Tr}(S^2_C)>0$, which concludes the proof.
\end{proof}

\begin{corollary}\label{coro:cdfiniteindex}
 Let $B \subset A$ be a Hopf subalgebra, with $A$ finitely generated as a left $B$-module.
 Assume that one of the following conditions holds.
\begin{enumerate}
\item $A$ is of Kac type, and faithfully flat as a left or right $B$-module. 
\item $A$ is cosemisimple of Kac type.
\item $A$ is a compact Hopf algebra.
\end{enumerate}
Then 
${\rm cd}(A)={\rm cd}(B)$.
\end{corollary}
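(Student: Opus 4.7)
The plan is to reduce each of the three cases to a direct application of Proposition~\ref{prop:subsmooth}, parts (1) and (3), which together deliver the equality ${\rm cd}(A) = {\rm cd}(B)$ as soon as one verifies its three running hypotheses: bijectivity of the antipode of $A$, faithful flatness of $A$ as a one-sided $B$-module, and the existence of a non-trivial right $A/B$-integral $t$ with $\varepsilon(t) \neq 0$. Proposition~\ref{prop:exisint} is tailored to produce exactly the kind of integral required, so the bulk of the argument is really a case-by-case verification of the three hypotheses.

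First I would handle the production of the integral and the bijectivity of $S$. Under hypothesis (1) or (2), $A$ is of Kac type, so $S^2 = {\rm id}$ and $S$ is bijective; Proposition~\ref{prop:exisint}(1) then supplies an integral $t$ with $\varepsilon(t) \neq 0$. Under hypothesis (3), compact Hopf algebras have bijective antipode (a standard feature of CQG algebras), and Proposition~\ref{prop:exisint}(2) supplies an integral with $\varepsilon(t) > 0$. In every case, $\varepsilon(t) \neq 0$ combined with $\varepsilon(B^+A) = 0$ immediately forces $t \notin B^+A$, which is precisely the non-triviality requirement.

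It remains to secure faithful flatness, which is part of the hypothesis in case (1) but has to be extracted in cases (2) and (3). Here I would invoke the classical fact that every cosemisimple Hopf algebra is faithfully flat over any Hopf subalgebra; this applies in case (2) by assumption and in case (3) because every compact Hopf algebra is cosemisimple via its Haar state. With all three hypotheses in hand, Proposition~\ref{prop:subsmooth}(1) confirms that $A$ is finitely generated and projective as a left and right $B$-module, so any projective $A$-resolution of $\mathbb{C}_\varepsilon$ restricts to a projective $B$-resolution, which gives ${\rm cd}(B) \leq {\rm cd}(A)$ at no extra cost; Proposition~\ref{prop:subsmooth}(3) then supplies the reverse inequality ${\rm cd}(A) \leq {\rm cd}(B)$ via the averaging argument of Lemma~\ref{lem:ave}. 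The only step requiring any care beyond bookkeeping is invoking faithful flatness from cosemisimplicity in cases (2) and (3); everything else is a mechanical assembly of the preceding results.
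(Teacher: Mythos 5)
Your proof is correct and follows essentially the same route as the paper, which simply combines Propositions~\ref{prop:subsmooth} and~\ref{prop:exisint} with Chirvasitu's theorem that a cosemisimple Hopf algebra is faithfully flat over its Hopf subalgebras. You have merely made explicit the case-by-case verifications (bijectivity of $S$, non-triviality of the integral from $\varepsilon(t)\neq 0$, and the source of faithful flatness in each case) that the paper leaves to the reader.
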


\begin{proof}
 The proof follows by combining Propositions \ref{prop:subsmooth} and \ref{prop:exisint}, together with the fact that a cosemisimple Hopf algebra is faithfully flat over its Hopf subalgebras \cite[Theorem 2.1]{chi}.
\end{proof}

We believe that the conclusion of Corollary \ref{coro:cdfiniteindex} holds whenever $A$ is cosemisimple, but we have no proof of this.


\section{Quantum symmetry algebras}

We now will apply Corollary \ref{coro:pslCY}, combined with crucial results from \cite{mro} and  \cite{wyz}, to show that $A_s(n)$ is Calabi-Yau of dimension $3$ when $n\geq 4$. In fact our result concerns a more general family of algebras, that we recall now.

Let $(R,\varphi)$ be a \textsl{finite-dimensional measured algebra}, i.e.\  $R$ is a finite-dimensional algebra and $\varphi \colon R \rightarrow \C$ is a linear map (a measure on $R$) such that the associated bilinear map $R \times R \rightarrow \C$, $(x,y) \mapsto \varphi(xy)$ is non-degenerate. Thus a  finite-dimensional measured algebra is a  Frobenius algebra together with a fixed measure.
A coaction of a Hopf algebra $A$ on a finite-dimensional measured algebra $(R,\varphi)$ is an $A$-comodule algebra structure on $R$ such that $\varphi \colon R \rightarrow \C$ is $A$-colinear.

\begin{definition}
 The universal Hopf algebra coacting on a finite-dimensional measured algebra $(R,\varphi)$ is denoted $A_{\rm aut}(R,\varphi)$, and is called the \textsl{quantum symmetry algebra of $(R,\varphi)$}.
\end{definition}

The existence of $A_{\rm aut}(R,\varphi)$ (see \cite{wan98} in the compact case with $R$ semisimple and \cite{bi00} in general) follows from standard Tannaka-Krein duality arguments. 
The following particular cases are of special interest.

\begin{enumerate}
 \item For $R=\C^n$ and $\varphi=\varphi_n$ the canonical integration map (with $\varphi_n(e_i)=1$ for $e_1, \ldots , e_n$ the canonical basis of $\C^n$), we have  $A_{\rm aut}(\C^n,\varphi_n)=A_s(n)$, the coordinate algebra of the quantum permutation group \cite{wan98}, discussed in the introduction.
\item For $R=M_2(\C)$ and $q \in \C^*$, let ${\rm tr}_q \colon M_2(\C) \rightarrow  \C$ be the $q$-trace, i.e.\ for $g=(g_{ij})\in M_2(\C)$,  ${\rm tr}_q(g) =qg_{11}+q^{-1}g_{22}$ . Then we have $A_{\rm aut}(M_2(\C),{\rm tr}_q) \simeq \mathcal O({\rm PSL}_q(2))$, the  isomorphism  being constructed using the universal property  of $A_{\rm aut}(M_2(\C),{\rm tr}_q)$ (and the verification that it is indeed an isomorphism being a long and tedious computation, as in \cite{dij}).
\end{enumerate}

Let $(R,\varphi)$ be a finite-dimensional measured algebra. Since $\varphi \circ m$ is non-degenerate, where $m$ is the multiplication of $R$, there exists a linear map $\delta \colon \C \rightarrow R \otimes R$ such that 
$(R,\varphi \circ  m, \delta)$ is a left dual for $R$, i.e.\
$$((\varphi \circ m)\otimes {\rm id}_R) \circ ({\rm id}_R \otimes \delta)={\rm id}_R= 
({\rm id}_R \otimes  (\varphi \circ m)) \circ (\delta \otimes {\rm id}_R)$$
 Following \cite{mro}, we put
$$\tilde{\varphi}= \varphi  \circ m \circ (m \otimes {\rm id}_R) \circ ({\rm id}_R \otimes  \delta)\colon R \rightarrow \C$$
 Using the definition of Frobenius algebra in terms of coalgebras, 
the coproduct of $R$ is $\Delta = (m \otimes {\rm id}_R) \circ ({\rm id}_R \otimes  \delta)=({\rm id}_R\otimes m) \circ ( \delta \otimes  {\rm id}_R)$, and we have $\tilde{\varphi}=\varphi \circ m \circ \Delta$.

\begin{definition} We say that $(R,\varphi)$ (or $\varphi$) is \textsl{normalizable} if $\varphi(1)\not= 0$ and if there exists $\lambda \in \C^*$ such that $\tilde{\varphi}=\lambda \varphi$. 
\end{definition}

The condition that $\varphi$ is normalizable is equivalent to require, in the language of  \cite[Definition 3.1]{kad}, that $R/\C$ is a strongly separable extension with Frobenius system $(\varphi, x_i,y_i)$, where $\delta(1)=\sum_ix_i \otimes y_i$. Hence it follows that if $\varphi$ is normalizable, then $R$ is necessarily a separable (semisimple) algebra. Conversely, if $R$ is semisimple,  writing $R$ as a direct product of matrix algebras, one easily sees the conditions that ensure that $\varphi$ is normalizable, see \cite{mro}.

The following result is 
\cite[Corollary 4.9]{mro}, generalizing earlier results from \cite{ba99,ba02,dervan}:

\begin{theorem}
Let $(R,\varphi)$ is a finite-dimensional semisimple measured algebra with $\dim(R)\geq 4$ and $\varphi$ normalizable. Then there exists an equivalence of $\mathbb C$-linear tensor categories   
$$\mathcal M^{A_{\rm aut}(R,\varphi)} \simeq^{\otimes} \mathcal M^{\mathcal O({\rm PSL}_q(2))}$$
for some $q \in \C^*$ with $q+q^{-1} \not=0$.
\end{theorem}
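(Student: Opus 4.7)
The natural approach is via Tannaka-Krein reconstruction. Since both Hopf algebras $A_{\rm aut}(R,\varphi)$ and $\mathcal O({\rm PSL}_q(2))$ are cosemisimple (the latter at generic $q$, the former because $R$ is semisimple), they are recovered up to isomorphism from their comodule categories together with the forgetful fiber functor, and a $\C$-linear tensor equivalence of these categories corresponds precisely to a monoidal Morita-type relation between the two Hopf algebras. The Tannaka-Krein construction of $A_{\rm aut}(R,\varphi)$ endows $\mathcal M^{A_{\rm aut}(R,\varphi)}$ with a universal property: it is the universal semisimple $\C$-linear rigid tensor category containing a Frobenius algebra object isomorphic to $(R,\varphi)$ as a measured algebra. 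So it suffices to exhibit, inside $\mathcal M^{\mathcal O({\rm PSL}_q(2))}$ for an appropriate $q$, a measured Frobenius algebra object abstractly isomorphic to $(R,\varphi)$ that tensor-generates the whole target category.

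To produce the required algebra object, I would first decompose $R = \prod_{i=1}^k M_{n_i}(\C)$ and analyze how the normalization condition $\tilde\varphi = \lambda\varphi$ constrains the restriction of $\varphi$ to each matrix block. Using the Frobenius system $(\varphi, x_i, y_i)$ and unwinding the identity $\tilde\varphi=\lambda\varphi$, one finds that the data is encoded by the block sizes $(n_i)$ together with a single quantum parameter $q \in \C^*$ with $q+q^{-1}\neq 0$ (nonzero thanks to $\varphi(1)\neq 0$). On the target side, $\mathcal M^{\mathcal O({\rm PSL}_q(2))}$ is the full subcategory of $\mathcal M^{\mathcal O({\rm SL}_q(2))}$ generated by the odd-dimensional simples $V_{2m}$, whose categorical dimensions are the $q$-integers $[2m+1]_q$. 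For a suitable object $V = \bigoplus_i V_{2m_i}^{\oplus n_i}$, the internal endomorphism algebra $V \otimes V^*$ is a Frobenius algebra object in the category whose underlying algebra is $\prod_i M_{n_i}(\C)$, equipped with a canonical measure coming from the categorical trace. By tuning the weights $m_i$ and the parameter $q$ against those extracted from $(R,\varphi)$, one gets an algebra object inside $\mathcal M^{\mathcal O({\rm PSL}_q(2))}$ abstractly isomorphic to $(R,\varphi)$, and the universal property of $A_{\rm aut}(R,\varphi)$ yields a tensor functor between the two comodule categories.

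It remains to show that this functor is an equivalence. Essential surjectivity follows from the hypothesis $\dim(R) \geq 4$, which forces $V$ to contain enough simple constituents to tensor-generate $\mathcal M^{\mathcal O({\rm PSL}_q(2))}$; the condition $\dim(R)\geq 4$ also prevents the degenerate cases where the target category would collapse to something too small. The substantive step is full faithfulness: one must verify that no extra relations hold in $\mathcal M^{A_{\rm aut}(R,\varphi)}$ beyond those already present in the target, i.e.\ that the intertwiner spaces $\Hom(R^{\otimes r}, R^{\otimes s})$ have the dimensions predicted by the Temperley-Lieb/$SO(3)$-type fusion rules of $\mathcal M^{\mathcal O({\rm PSL}_q(2))}$. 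In the classical case $R=\C^n$, $\varphi=\varphi_n$, where $q+q^{-1}=\sqrt n$, this is precisely Banica's theorem \cite{ba99} identifying these morphism spaces with noncrossing partition diagrams, which matches the Temperley-Lieb description of $\mathcal M^{\mathcal O({\rm PSL}_q(2))}$.

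The main obstacle is therefore extending this diagrammatic or Schur-Weyl-type computation of intertwiner dimensions from the case $\C^n$ to arbitrary semisimple normalizable $(R,\varphi)$. This is exactly where the normalization hypothesis on $\varphi$ enters crucially: it ensures that the natural evaluation/coevaluation morphisms built from $(\varphi,\delta)$ satisfy the scaling relations needed for the generating tensor diagrams to close up into a rigid structure with the right quantum dimensions, and so reduces the intertwiner computation to the universal Temperley-Lieb one. Without normalizability, the two sides can differ, which is consistent with the parameter $q$ being genuinely determined by $(R,\varphi)$.
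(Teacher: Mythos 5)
The paper does not actually prove this statement: it imports it wholesale as Corollary 4.9 of Mrozinski \cite{mro}, generalizing \cite{ba99,ba02,dervan}, so there is no internal argument to compare yours against. Judged on its own terms, your proposal rests on a strategy that cannot work. You propose to exhibit inside $\mathcal M^{\mathcal O({\rm PSL}_q(2))}$ a measured algebra object abstractly isomorphic to $(R,\varphi)$ and then invoke the universal property of $A_{\rm aut}(R,\varphi)$. But such an object is precisely a coaction of $\mathcal O({\rm PSL}_q(2))$ on $(R,\varphi)$, so the universal property yields a Hopf algebra map $A_{\rm aut}(R,\varphi)\to\mathcal O({\rm PSL}_q(2))$ and hence a tensor functor commuting with the forgetful functors; if that functor were an equivalence, Tannaka--Krein reconstruction would force the two Hopf algebras to be isomorphic, and in particular it would match up simple comodules of equal vector space dimension, which fails in general (for $R=\C^5$ the fundamental simple comodule has dimension $4$, while every simple $\mathcal O({\rm PSL}_q(2))$-comodule has odd dimension, so the required algebra object does not even exist). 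The equivalence asserted in the theorem is genuinely incompatible with the fiber functors: it is implemented by a nonzero bi-Galois object (a cogroupoid linking the various normalizable $(R,\varphi)$ to $(M_2(\C),{\rm tr}_q)$), and it sends $R$ to an algebra object with the same quantum dimension but a different underlying dimension, e.g.\ $(\C^n,\varphi_n)\mapsto(M_2(\C),{\rm tr}_q)$. That is the route taken in \cite{mro}, and in the compact case in \cite{ba99,dervan}.

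Several subsidiary claims are also incorrect. The internal endomorphism object $V\otimes V^*$ of $V=\bigoplus_i V_{2m_i}^{\oplus n_i}$ has underlying algebra $\End(V)\simeq M_{\dim V}(\C)$, a single matrix block, not $\prod_i M_{n_i}(\C)$: you are conflating the algebra object $V\otimes V^*$ with the ordinary intertwiner algebra $\Hom^{\mathcal O({\rm PSL}_q(2))}(V,V)$. Cosemisimplicity of $A_{\rm aut}(R,\varphi)$ does not follow from semisimplicity of $R$: the theorem allows $q$ to be a root of unity, in which case neither $\mathcal O({\rm PSL}_q(2))$ nor $A_{\rm aut}(R,\varphi)$ is cosemisimple, which is exactly why the paper's Theorem \ref{thm:Aaut} deduces cosemisimplicity only when $\varphi$ is a trace. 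Finally, the step you defer as ``the main obstacle'' --- showing that the intertwiner spaces of $R^{\otimes k}$ are no larger than the $SO(3)$/Temperley--Lieb prediction for an arbitrary normalizable $(R,\varphi)$ --- is the entire mathematical content of the result; asserting that normalizability ``reduces the intertwiner computation to the universal Temperley--Lieb one'' restates what must be proved rather than proving it. As it stands the proposal is not a proof, and its central architectural choice would need to be replaced by the bi-Galois/cogroupoid argument.
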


The above parameter $q$ is determined as follows. First consider $\lambda \in \C^*$ such that $\tilde{\varphi}=\lambda \varphi$ and choose $\mu \in \C^*$ such that $\mu^2 = \lambda \varphi(1)$. Then $q$ is any solution of the equation
$q+q^{-1}=\mu$ (recall that $\mathcal O({\rm PSL}_q(2))=\mathcal O({\rm PSL}_{-q}(2))$, so the choice of $\mu$ does not play any role).

As an example, for $(\C^n,\varphi_n)$ as above (and $n \geq 4$), it is immediate that $\varphi_n$ is normalizable with the corresponding $\lambda$ equal to $1$, and $q$ is any solution of the equation $q+q^{-1}=\sqrt{n}$. 

It was shown in \cite{bic} (Theorem 6.5 and the comments after) that if $(R,\varphi)$ is a finite-dimensional semisimple measured algebra with $\dim(R)\geq 4$ and  $\varphi$ normalizable and such that $A_{\rm aut}(R,\varphi))$ is cosemisimple, then ${\rm cd}(A_{\rm aut}(R,\varphi))\leq 3$, with equality if $\varphi$ is a trace. We generalize this result here, as follows.

\begin{theorem}\label{thm:Aaut}
 Let $(R,\varphi)$ be a finite-dimensional semisimple measured algebra with $\dim(R)\geq 4$ and $\varphi$ normalizable. Then $A_{\rm aut}(R,\varphi)$ is a twisted Calabi-Yau algebra of dimension $3$, and is Calabi-Yau if $\varphi$ is a trace.
\end{theorem}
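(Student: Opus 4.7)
The plan is to transport the twisted Calabi-Yau property from $\mathcal O(\mathrm{PSL}_q(2))$ to $A_{\rm aut}(R,\varphi)$ through the monoidal equivalence of comodule categories provided by Mrozinski's theorem stated just above. First I would apply that theorem to obtain, for a suitable $q\in \C^*$ with $q+q^{-1}\neq 0$, an equivalence
$$\mathcal M^{A_{\rm aut}(R,\varphi)}\simeq^\otimes \mathcal M^{\mathcal O(\mathrm{PSL}_q(2))}.$$
Next I would invoke the main result of Wang--Yu--Zhang \cite{wyz}, to the effect that being twisted Calabi-Yau of a fixed dimension is an invariant of the monoidal category of comodules (equivalently, it is preserved under the 2-cocycle twisting procedure associated to a Hopf bi-Galois object). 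Combined with Corollary \ref{coro:pslCY}, this yields the first assertion: $A_{\rm aut}(R,\varphi)$ is twisted Calabi-Yau of dimension $3$.

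For the Calabi-Yau conclusion when $\varphi$ is a trace, I would appeal to Lemma \ref{lemm:tcy-cy}. Given that $A_{\rm aut}(R,\varphi)$ has just been shown to be twisted Calabi-Yau of dimension $3$, it suffices to verify that $\Ext^3_{A_{\rm aut}(R,\varphi)}(\C_\varepsilon,\C_\varepsilon) \neq 0$, i.e., that $H^3(A_{\rm aut}(R,\varphi),\C)\neq 0$. This nonvanishing is precisely what is used in \cite[Theorem 6.5]{bic} (cited in the paragraph preceding the statement) to establish the lower bound ${\rm cd}(A_{\rm aut}(R,\varphi)) \geq 3$ when $\varphi$ is a trace. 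Lemma \ref{lemm:tcy-cy} then forces the modular character to equal $\varepsilon$, and $A_{\rm aut}(R,\varphi)$ is Calabi-Yau.

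The main obstacle is the middle step: unpacking and verifying the applicability of the Wang--Yu--Zhang invariance in this precise setting. One must check that a $\C$-linear monoidal equivalence of comodule categories genuinely transfers homological smoothness, the Ext dimension, and the one-dimensionality of $\Ext^3(\C_\varepsilon,A_A)$ (the modular character is allowed to change under the twist, but it must remain well defined). A careful reading of \cite{wyz}, or a reformulation of the equivalence via a cleft Hopf bi-Galois object giving rise to a 2-cocycle twist, is what makes this rigorous; a secondary but minor point is to confirm that the bound in \cite[Theorem 6.5]{bic} in the trace case is indeed witnessed by a nonzero class in Hochschild cohomology with \emph{trivial} coefficients, so that Lemma \ref{lemm:tcy-cy} truly applies.
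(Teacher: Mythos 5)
Your overall strategy is exactly the paper's: Mrozinski's monoidal equivalence with $\mathcal M^{\mathcal O({\rm PSL}_q(2))}$, transfer of the twisted Calabi--Yau property via Wang--Yu--Zhang, and Lemma \ref{lemm:tcy-cy} applied to a nonvanishing $H^3(A,\mathbb C)$ in the trace case. However, the two points you flag as ``obstacles'' are genuine gaps, and both require specific inputs that you do not supply. First, \cite[Theorem 4]{wyz} is not an unconditional statement that being twisted Calabi--Yau of a fixed dimension is a monoidal invariant of comodule categories: in the form needed here it requires that the trivial Yetter--Drinfeld module over $A=A_{\rm aut}(R,\varphi)$ admit a finite resolution by finitely generated relative projective Yetter--Drinfeld modules. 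The paper closes this gap by citing the proof of Theorem 6.4 in \cite{bic} (and the comments after Theorem 6.5 there), which construct precisely such a resolution for these quantum symmetry algebras. Without that input the middle step does not go through, so ``a careful reading of \cite{wyz}'' alone is not enough --- one must import the Yetter--Drinfeld resolution from \cite{bic}.

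Second, in the trace case the nonvanishing of $H^3(A,\mathbb C)$ cannot simply be read off from \cite[Theorem 6.5]{bic}: that result (as recalled in the paper just before the theorem) assumes $A$ is \emph{cosemisimple}, which is not automatic. The paper derives cosemisimplicity from the trace hypothesis: normalizability together with the trace condition force the parameter $\mu$ (with $q+q^{-1}=\mu$) to be real with $\mu^2\geq 4$, so $\mathcal O({\rm PSL}_q(2))$ is cosemisimple and hence so is $A$ via the monoidal equivalence. Only then does \cite[Theorem 6.5]{bic} give $H^3_b(A)\simeq\mathbb C$ for bialgebra cohomology, and \cite[Proposition 5.9]{bic} is the bridge converting this into $H^3(A,\mathbb C)\simeq \Ext^3_A(\mathbb C_\varepsilon,\mathbb C_\varepsilon)\neq(0)$ --- the ``secondary but minor point'' you mention is thus an actual step with a precise reference, not a formality. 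With these two inputs added, your argument coincides with the paper's proof.
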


\begin{proof}
We have already pointed out that, by \cite[Corollary 4.9]{mro}, there exists $q \in \C^*$, with $q+q^{-1} \not=0$, such that 
$$\mathcal M^{A_{\rm aut}(R,\varphi)} \simeq^{\otimes} \mathcal M^{\mathcal O({\rm PSL}_q(2))}$$
Put, for notational simplicity, $A= A_{\rm aut}(R,\varphi)$. 
By the proof of Theorem 6.4 in \cite{bic} and the comments after \cite[Theorem 6.5]{bic}, the trivial Yetter-Drinfeld module over $A$ has a finite resolution by finitely generated relative projective Yetter-Drinfeld modules over $A$, so since $\mathcal O({\rm PSL}_q(2))$ is twisted Calabi-Yau of dimension $3$ by Corollary \ref{coro:pslCY}, it follows from \cite[Theorem 4]{wyz} that $A$ is twisted Calabi-Yau of dimension $3$.

If $\varphi$ is a trace, it follows from the normalizability and from the analysis in Section $2$ of \cite{mro} that $\mu$ as above is a real number such that $\mu^2 \geq 4$, so $q$ as above is such that  $\mathcal O({\rm PSL}_q(2))$ is cosemisimple, and hence $A$ is cosemisimple. Thus by \cite[Theorem 6.5]{bic} we have $H^3_b(A)\simeq \mathbb C$, where $H_b^*$ denotes bialgebra cohomology \cite{gs1}, so 
by \cite[Proposition 5.9]{bic} we have $H^3(A, \mathbb C) \simeq {\rm Ext}_A^3(\mathbb C_\varepsilon, \mathbb C_\varepsilon) \not =(0)$. 
We conclude that $A$ is Calabi-Yau by Lemma \ref{lemm:tcy-cy}.
\end{proof}

\begin{proof}[Proof of Theorem \ref{thm:asncala}] We already know from the previous result that $A_s(n)$ is Calabi-Yau of dimension $3$ for $n \geq 4$, and it remains to compute the cohomology spaces with trivial coefficients. We have, as usual, $H^0(A_s(n), \mathbb C)\simeq \mathbb C$, so the homological duality gives $H^3(A_s(n), \mathbb C) \simeq \mathbb C$. It is easy to check, using the fact that $A_s(n)$ is generated by projections, that $H^1(A_s(n), \mathbb C)=(0)$. We also have 
$$H^2(A_s(n), \mathbb C) \simeq H_1(A_s(n), \mathbb C) \simeq {\rm Tor}_1^{A_s(n)}(\mathbb C_\varepsilon, {_\varepsilon \! \mathbb C})$$
where the latter space is the quotient of $A_s(n)$ by the subspace generated by the elements $ab-\varepsilon(a)b-\varepsilon(b)a$, $a,b \in A_s(n)$, and is easily seen to be zero.  
\end{proof}

\section{Second cohomology  of a quantum permutation algebra}

In this section we show that the vanishing of the second cohomology with trivial coefficients of $A_s(n)$ holds for a much more general class of quantum permutation algebras.

Given a matrix $d=(d_{ij}) \in M_n(\mathbb C)$, we denote by $A_s(n,d)$ the quotient of $A_s(n)$ by the relations
$$\sum_{k=1}^n d_{ik}u_{kj} = \sum_{k=1}^n u_{ik}d_{kj}$$
 The algebras $A_s(n,d)$ are Hopf algebras, with structure maps induced by those of $A_s(n)$.
When $d$ is the adjacency matrix of a finite graph $X$, the Hopf algebra $A_s(n,d)$ represents the quantum automorphism group of the graph $X$, in the sense of \cite{ban}. 

\begin{example}
As interesting examples of Hopf algebras of the type $A_s(n,d)$, let us mention the coordinate algebras of the so-called quantum reflection groups \cite{bic04, bv}. Let $p \geq 1$ and let $A_h^p(n)$ be the algebra presented by generators $u_{ij}$, $1\leq i,j \leq n$, subject to the relations
$$u_{ij}u_{ik}=0= u_{ji}u_{ki}, \ k \not=j, \ \sum_{j=1}^nu_{ij}^p=1=\sum_{j=1}^nu_{ji}^p$$
This is a Hopf algebra again \cite{bic04}, with $S(u_{ij})=u_{ji}^{p-1}$, and we have $A_h^p(n) \simeq A_s(np, d)$, where $d$ is the adjacency matrix of the graph formed by $n$ disjoint copies of an oriented $p$-cycle \cite[Corollary 7.6]{bb}.
\end{example}


\begin{theorem}\label{thm:vanish}
 We have $H^2(A_s(n,d), \mathbb C)=(0)$ for any matrix $d \in M_n(\mathbb C)$.
\end{theorem}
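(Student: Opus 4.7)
The plan is to interpret $H^2(A_s(n,d), \mathbb C)$ as classifying square-zero central extensions $0 \to \mathbb C \eta \to E \to A_s(n,d) \to 0$ (with $\eta^2 = 0$ and $a\eta = \varepsilon(a)\eta = \eta a$), and to prove that every such $E$ splits as an algebra by producing lifts $\tilde u_{ij} \in E$ of the generators that satisfy all the defining relations of $A_s(n,d)$. The assignment $u_{ij} \mapsto \tilde u_{ij}$ then extends to an algebra section of $E \to A_s(n,d)$.

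First I would lift each $u_{ij}$ to a projection in $E$. Given any initial lift $\tilde u_{ij}^{(0)}$, one has $(\tilde u_{ij}^{(0)})^2 - \tilde u_{ij}^{(0)} = c_{ij}\eta$ for a scalar $c_{ij}$; setting $\tilde u_{ij} := \tilde u_{ij}^{(0)} + c_{ij}(1 - 2\varepsilon(u_{ij}))^{-1}\eta$ produces a projection, as $\varepsilon(u_{ij}) = \delta_{ij} \in \{0,1\}$ makes $1 - 2\varepsilon(u_{ij}) = \pm 1$. Next, for $j \ne k$ the product $\tilde u_{ij}\tilde u_{ik}$ lifts $0 \in A_s(n,d)$, so equals $c\eta$ for some scalar $c$; associating $\tilde u_{ij}^2\, \tilde u_{ik} = \tilde u_{ij}(\tilde u_{ij}\tilde u_{ik})$ gives $c\eta = c\,\varepsilon(u_{ij})\eta$, forcing $c = 0$ when $i \ne j$, and the symmetric identity using $\tilde u_{ik}^2 = \tilde u_{ik}$ forces $c = 0$ when $i \ne k$. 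Since $j \ne k$ at least one of these holds, so $\tilde u_{ij}\tilde u_{ik} = 0$; the column relation $\tilde u_{ji}\tilde u_{ki} = 0$ follows symmetrically. For the row sum, write $\sum_j \tilde u_{ij} = 1 + R_i\eta$ and multiply on the right by $\tilde u_{ii}$: the left-hand side collapses via $\tilde u_{ij}\tilde u_{ii} = \delta_{ij}\tilde u_{ii}$ to $\tilde u_{ii}$, whereas the right-hand side is $\tilde u_{ii} + R_i\eta$, so $R_i = 0$; the column sum is treated identically.

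The main step, and the expected main obstacle, is verifying the commutation with $d$. Set $K_{ij}\eta := \sum_k d_{ik}\tilde u_{kj} - \sum_k \tilde u_{ik}d_{kj}$ (an element of $\mathbb C\eta$ since it vanishes in $A_s(n,d)$). The key idea is to sandwich this element on the left by $\tilde u_{ii}$ and on the right by $\tilde u_{jj}$. Since $\varepsilon(u_{ii}) = \varepsilon(u_{jj}) = 1$, the sandwich leaves $K_{ij}\eta$ unchanged. On the other hand, associating the three-fold products and invoking the product relations from the previous step gives
\[
\tilde u_{ii}\,\tilde u_{kj}\,\tilde u_{jj} = \tilde u_{ii}(\tilde u_{kj}\tilde u_{jj}) = \delta_{kj}\,\tilde u_{ii}\tilde u_{jj},
\qquad
\tilde u_{ii}\,\tilde u_{ik}\,\tilde u_{jj} = (\tilde u_{ii}\tilde u_{ik})\tilde u_{jj} = \delta_{ik}\,\tilde u_{ii}\tilde u_{jj},
\]
so after multiplication by the scalars $d_{ik}$ and $d_{kj}$ each sum collapses to $d_{ij}\,\tilde u_{ii}\tilde u_{jj}$, and their difference is zero. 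Hence $K_{ij} = 0$. All defining relations of $A_s(n,d)$ being satisfied by the projections $\tilde u_{ij}$ in $E$, the extension splits and $H^2(A_s(n,d), \mathbb C) = 0$ follows.
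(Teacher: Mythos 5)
Your proof is correct, and it takes a different (though closely parallel) route from the paper. You invoke the classical interpretation of $H^2(A,\mathbb C)$ as classifying square-zero extensions of $A=A_s(n,d)$ by the trivial bimodule, and split every such extension by successively correcting the lifted generators: the idempotent correction, the vanishing of $\tilde u_{ij}\tilde u_{ik}$ via left/right multiplication by the idempotents themselves, the row/column sum defects killed by multiplying with $\tilde u_{ii}$, and the sandwich $\tilde u_{ii}(\,\cdot\,)\tilde u_{jj}$ for the $d$-commutation relation all check out, after which the universal property of the presentation yields an algebra section. The paper instead works directly with a normalized $2$-cocycle $c$: it first establishes a list of identities satisfied by $c$ (Lemma \ref{comput}), uses them to build an explicit algebra map $\rho_c$ into the triangular algebra $T_3(A)$ of operators on $\mathbb C\oplus A^+\oplus\mathbb C$ (Lemma \ref{rep}), and then applies a general criterion (Lemma \ref{useful}) converting the existence of such a representation into $c=\delta(-\psi)$. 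Your sandwiching trick for the relation $\sum_k d_{ik}\tilde u_{kj}=\sum_k\tilde u_{ik}d_{kj}$ is the exact counterpart of item (8) of Lemma \ref{comput}, where $X_{ij}$ is multiplied by $u_{ii}$ and $u_{jj}$. What your route buys is economy: no explicit cocycle identities are needed, only the ring structure of the extension. What the paper's route buys is that it produces the functional $\psi$ with $c=\delta(-\psi)$ explicitly inside a triangular representation, which is essentially a Sch\"urmann-triple datum and thus connects directly with the AC property of \cite{fgt,fks} that motivates the theorem.
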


\begin{rems} 
 \begin{enumerate}
\item It is an immediate verification that $H^1(A_s(n,d), \mathbb C)=(0)$. On the other hand, it is not possible to extend the vanishing result of Theorem \ref{thm:vanish} to degree $3$, since $H^3(A_s(n), \mathbb C)\not=(0)$ for $n \geq 4$, by Theorem \ref{thm:asncala}.
\item
 It follows from Theorem \ref{thm:vanish} and Proposition 5.9 in \cite{bic} that $H^2_b(A_s(n,d))=(0)$ as well, where $H^*_b$ denotes the bialgebra cohomology of Gerstenhaber-Schack \cite{gs1}. 
\item
 The recent $L^2$-Betti numbers computations in \cite{krvv} show that the algebra $A_h^p(n)$ is not Calabi-Yau for $p \geq 2$, hence Theorem \ref{thm:asncala} cannot be generalized to the class of Hopf algebras  $A_s(n,d)$.
\end{enumerate}
\end{rems}


\subsection{A general lemma}

In this subsection we present a general lemma, useful to show when a $2$-cocycle of an augmented algebra is a coboundary.

We fix an augmented algebra $(A, \varepsilon)$, and put as usual $A^+={\rm Ker}(\varepsilon)$. We denote by $T_3(A)$ the subspace of endomorphisms of $\mathbb C \oplus A^+ \oplus \mathbb C$ represented by matrices
$$\begin{pmatrix}
  \lambda & f & \mu \\
0 & a & b \\
0 & 0  & \gamma
  \end{pmatrix}
$$
where $\lambda, \mu, \gamma \in \mathbb C$, $f \in (A^+)^*$, $a \in A$, $b \in A^+$,
and such a matrix acts on $(\alpha, c ,\beta) \in \mathbb C \oplus A^+ \oplus \mathbb C$ by
$$\begin{pmatrix}
  \lambda & f & \mu \\
0 & a & b \\
0 & 0  & \gamma
  \end{pmatrix} \begin{pmatrix}
\alpha \\
c \\
\beta
\end{pmatrix} =
\begin{pmatrix}
 \lambda \alpha +f(c) + \mu \beta\\
ac + \beta b \\
\gamma \beta
\end{pmatrix}
$$
It is straightforward to check that the composition of two such operators is again of the same type, with the composition corresponding to the naive matrix multiplication
$$\begin{pmatrix}
  \lambda & f & \mu \\
0 & a & b \\
0 & 0  & \gamma
  \end{pmatrix} \begin{pmatrix}
  \lambda' & f' & \mu' \\
0 & a' & b' \\
0 & 0  & \gamma'
  \end{pmatrix} =
\begin{pmatrix}
  \lambda \lambda'& \lambda f' +f(a'-) & \lambda \mu' +f(b')+\mu \gamma' \\
0 & aa' & ab' +b\gamma'\\
0 & 0  & \gamma \gamma'
  \end{pmatrix}
$$
and thus $T_3(A)$ is a subalgebra of ${\rm End}(\mathbb C \oplus A^+ \oplus \mathbb C)$.

Now let $ \psi \colon A \rightarrow \mathbb C$ and $c \colon A \otimes A \rightarrow \mathbb C$ be normalized linear functionals, which means that $\psi(1)=0$ and $c(1 \otimes 1)=0$. Recall (see Section 2) that $c$ is a Hochschild $2$-cocycle for the trivial bimodule $\mathbb C$ if and only if for any $a,b \in A$, we have
$$\varepsilon(a)c(b\otimes -) -c(ab\otimes -) + c(a\otimes b-)-c(a\otimes b)\varepsilon(-)=0 \quad (\star)$$ It is easy to see that a normalized $2$-cocycle satisfies 
$c(a \otimes 1)=0=c(1 \otimes a)$ for any $a \in A$.

\begin{lemma}\label{useful}
Let $ \psi \colon A \rightarrow \mathbb C$ be a normalized linear functional, and let $c \in Z^2(A, \mathbb C)$ be a normalized $2$-cocycle.
 The map 
\begin{align*}
 \rho \colon A & \longrightarrow T_3(A) \\
a  & \longmapsto \begin{pmatrix}
  \varepsilon(a) & c(a\otimes -) & \psi(a) \\
0 & a & a-\varepsilon(a) \\
0 & 0  & \varepsilon(a)
  \end{pmatrix}
\end{align*}
is an algebra map if and only if $c = \delta(-\psi)$
\end{lemma}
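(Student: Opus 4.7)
The plan is a direct entry-by-entry verification: expand $\rho(a)\rho(b)$ using the multiplication rule supplied for $T_3(A)$, compare with $\rho(ab)$, and see which entries give automatic identities and which genuinely constrain $c$ and $\psi$. The algebra of $T_3(A)$ is small enough that the computation is purely mechanical; the only subtlety is noting that the $(1,2)$-slot is valued in $(A^+)^\ast$, and this is precisely what keeps the condition $c=\delta(-\psi)$ from collapsing to $c=0$.

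First I would verify unitality: $\rho(1)=I_3$ because $\varepsilon(1)=1$, $\psi(1)=0$, and a normalized $2$-cocycle satisfies $c(1\otimes x)=0$ for $x\in A^+$. Then I would multiply $\rho(a)\rho(b)$ and compare entries with $\rho(ab)$. The entries $(1,1)$, $(2,2)$, $(3,3)$ reduce at once to $\varepsilon(a)\varepsilon(b)=\varepsilon(ab)$ or $aa'=ab$; the $(2,3)$-entry gives $a(b-\varepsilon(b))+(a-\varepsilon(a))\varepsilon(b)=ab-\varepsilon(ab)$, matching $\rho(ab)$. So the only informative entries are $(1,2)$ and $(1,3)$.

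The $(1,2)$-entry yields, as a functional on $A^+$, the demand
$$c(ab\otimes x)\;=\;\varepsilon(a)\,c(b\otimes x)+c(a\otimes bx),\qquad x\in A^+.$$
By the $2$-cocycle identity applied to $(a,b,x)$, the difference between the two sides is $c(a\otimes b)\varepsilon(x)$, which vanishes because $x\in A^+$. Thus this condition is automatic and imposes no constraint whatsoever — the hypothesis that $c\in Z^2(A,\mathbb{C})$ is used exactly here. The $(1,3)$-entry, using $c(a\otimes 1)=0$, simplifies to
$$\varepsilon(a)\psi(b)+c(a\otimes b)+\psi(a)\varepsilon(b)\;=\;\psi(ab),$$
equivalently $c(a\otimes b)=\psi(ab)-\varepsilon(a)\psi(b)-\psi(a)\varepsilon(b)=\delta(-\psi)(a\otimes b)$. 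This is the sole genuine condition, and both directions of the equivalence read off from it.

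The main (and only) obstacle is the temptation to read the $(1,2)$-equality as forcing $c(a\otimes b)=0$: that would be the case if $c(a\otimes -)$ were treated as a functional on all of $A$, but since it lives in $(A^+)^\ast$ and the cocycle condition has its defect term weighted by $\varepsilon(x)$, the entire obstruction is wiped out. Once this point is pinpointed, the rest is bookkeeping.
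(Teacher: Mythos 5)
Your proof is correct and follows essentially the same route as the paper's: unitality from normalization, the $(1,2)$-entry handled by restricting the cocycle identity to $A^+$ (where the defect term $c(a\otimes b)\varepsilon(-)$ dies), and the $(1,3)$-entry producing exactly the condition $c=\delta(-\psi)$. Your version merely spells out the remaining entries explicitly, which the paper leaves implicit.
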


\begin{proof}
We have $\rho(1)=1$ since $c$ and $\psi$ are normalized. For $a, b \in A$,  the $2$-cocycle condition gives
$$c(ab\otimes -)_{|A^+}= (\varepsilon(a)c(b\otimes -)+c(a\otimes b-))_{|A^+}$$
It thus follows that
 $\rho(ab)=\rho(a)\rho(b)$ if and only if
$$c(a \otimes (b-\varepsilon(b)))= c(a\otimes b)=-\varepsilon(a) \psi(b) +\psi(ab)-\psi(a)\varepsilon(b)=\delta(-\psi)(a \otimes b)$$ 
and this gives the announced result.
\end{proof}

\subsection{Proof of Theorem \ref{thm:vanish}}

We now let $A=A_s(n,d)$ as in the beginning of the section. The following lemma provides the main properties of $2$-cocycles on $A$.

\begin{lemma}\label{comput}
 Let $c \in Z^2(A, \mathbb C)$ be a normalized $2$-cocycle. 
\begin{enumerate}
 \item For $j\not=k$, we have
$$\delta_{ij}c(u_{ik}\otimes -)+ c(u_{ij}\otimes u_{ik}-) - c(u_{ij}\otimes u_{ik}) \varepsilon(-)=0$$
\item For $j\not=k$, we have
$$\delta_{ij}c(u_{ki}\otimes -)+ c(u_{ji}\otimes u_{ki}-) - c(u_{ji}\otimes u_{ki}) \varepsilon(-)=0$$
\item For $i \not=j$, $k \not=i$, $k \not=j$, we have $c(u_{ij} \otimes u_{ik})=0=c(u_{ji}\otimes u_{ki})$.
\item We have $c(u_{ij} \otimes u_{ii})=c(u_{ii} \otimes u_{ij})=c(u_{ij} \otimes u_{jj})=c(u_{jj} \otimes u_{ij})$.
\item We have 
$$\delta_{ij}c(u_{ij} \otimes -) - c(u_{ij} \otimes -) + c(u_{ij} \otimes u_{ij}-) -\varepsilon(-)c(u_{ij} \otimes u_{ij})=0$$
\item $c(u_{ii} \otimes u_{ii}) = \frac{1}{2}\sum_j c(u_{ij} \otimes u_{ij}) = \frac{1}{2}\sum_j c(u_{ji} \otimes u_{ji})$.
\item We have for $i \not=j$, $c(u_{ji}\otimes u_{ji})= -c(u_{ii} \otimes u_{ji})$.
\item Put $X_{ij}=\sum_kd_{ik}u_{kj} = \sum_k u_{ik}d_{kj}$. Then 
$$c(u_{ii} \otimes X_{ij}) = c(X_{ij} \otimes u_{ii})= c(X_{ij} \otimes u_{jj})=c(u_{jj} \otimes X_{ij})$$
\end{enumerate}
\end{lemma}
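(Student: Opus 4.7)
The proof will be a systematic application of the $2$-cocycle identity
$$\varepsilon(a)c(b\otimes -) - c(ab\otimes -) + c(a\otimes b-) - c(a\otimes b)\varepsilon(-) = 0 \quad(\star)$$
to the generators $u_{ij}$, exploiting the orthogonality relations $u_{ij}u_{ik}=\delta_{jk}u_{ij}$ and $u_{ji}u_{ki}=\delta_{jk}u_{ji}$, the counit sums $\sum_k u_{ik}=1=\sum_k u_{ki}$, the normalization $c(1\otimes -)=0=c(-\otimes 1)$, and the extra relations defining $A_s(n,d)$. Items (1) and (2) are then immediate: substituting $(a,b)=(u_{ij},u_{ik})$ or $(u_{ji},u_{ki})$ with $j\ne k$ in $(\star)$ kills $ab$, and hence the middle term. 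Item (5) is $(\star)$ at $a=b=u_{ij}$, using the idempotency $u_{ij}^2=u_{ij}$.

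For (3) I would evaluate (1) at $-=u_{i\ell}$; the row relation gives $u_{ik}u_{i\ell}=\delta_{k\ell}u_{ik}$ and $\varepsilon(u_{i\ell})=\delta_{i\ell}$, so under the hypothesis $i\ne j$ one obtains $(\delta_{k\ell}-\delta_{i\ell})c(u_{ij}\otimes u_{ik})=0$, and the choice $\ell=k$ with $k\ne i$ forces the vanishing. The column version of (3) follows from (2) in exactly the same way. Items (4) and (7) then fall out by writing the normalization $0=c(u_{ij}\otimes 1)=\sum_k c(u_{ij}\otimes u_{ik})$ (and the three analogous identities obtained by expanding $1$ in the first slot or along columns), in each case using (3) or its column version to annihilate all but one or two summands. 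For (6) I would sum (5) over $j$ with $i$ fixed, use $\sum_j u_{ij}=1$, then evaluate at $-=u_{ii}$; since $u_{ij}u_{ii}=\delta_{ij}u_{ii}$, the middle sum collapses to $c(u_{ii}\otimes u_{ii})$, producing $2c(u_{ii}\otimes u_{ii})=\sum_j c(u_{ij}\otimes u_{ij})$. The second equality in (6) comes from the column analogue of (5), obtained by applying $(\star)$ to $a=b=u_{ji}$ and summing over $j$.

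The main obstacle is (8), since it is the only place where the defining relation of $A_s(n,d)$ enters, and it requires combining several instances of $(\star)$. The preparatory step is to establish the auxiliary identities
$$u_{ii}X_{ij}=X_{ij}u_{ii}=d_{ij}u_{ii},\qquad u_{jj}X_{ij}=X_{ij}u_{jj}=d_{ij}u_{jj},$$
by expanding $X_{ij}$ in whichever of its two forms $\sum_k d_{ik}u_{kj}$ or $\sum_k u_{ik}d_{kj}$ makes the row or column orthogonality directly applicable. Noting also that $\varepsilon(X_{ij})=d_{ij}$, applying $(\star)$ with $(a,b)=(u_{ii},X_{ij})$ at $-=u_{ii}$ yields $c(X_{ij}\otimes u_{ii})=c(u_{ii}\otimes X_{ij})$; the same instance at $-=u_{jj}$ gives $c(X_{ij}\otimes u_{jj})=c(u_{ii}\otimes X_{ij})$; and $(\star)$ with $(a,b)=(u_{jj},X_{ij})$ at $-=u_{ii}$ gives $c(X_{ij}\otimes u_{ii})=c(u_{jj}\otimes X_{ij})$. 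In every case the $d_{ij}$-scaled terms cancel, leaving precisely the chain of four equalities claimed in (8).
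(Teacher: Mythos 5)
Your proof is correct and follows essentially the same strategy as the paper: systematic evaluation of the cocycle identity $(\star)$ on the generators $u_{ij}$ and on $X_{ij}$, using the orthogonality and sum relations. The only differences are cosmetic --- the paper obtains (4) and (7) from the $i=j$ instances of (1) and (2) evaluated at $u_{ii}$, $u_{kk}$ and $u_{ik}$ rather than by expanding $c(u_{ij}\otimes 1)$ and $c(1\otimes u_{ij})$ against (3), and it proves (8) with a single instance of $(\star)$ at $(u_{ii},X_{ij})$ combined with (4) rather than your three instances --- but all of your variants check out.
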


\begin{proof}
 (1) (resp. (2)) follows from the cocycle identity $(\star)$ applied to $a=u_{ij}$ and $b=u_{ik}$ (resp. to $a=u_{ji}$ and $b=u_{ki}$). The first (resp. second) identity in (3) follows from (1) (resp. (2)) applied to $u_{ii}$.

In (1), for $i=j \not=k$, we get 
$$c(u_{ik}\otimes -)  + c( u_{ii}\otimes u_{ik}-)-c(u_{ii} \otimes  u_{ik})\varepsilon(-)=0$$ which gives, applied to $u_{ii}$ and to $u_{kk}$, 
$$c(u_{ii}\otimes u_{ik})=c(u_{ik} \otimes u_{ii}), \  c(u_{ik}\otimes u_{kk})=c(u_{ii} \otimes u_{ik})$$
We get the remaining relation in (4) using (2) at $i=j$. (5) follows from the cocycle identity $(\star)$ applied to $a=u_{ij}=b$, and to 
 get (6), we sum (5) over $j$ to get
$$c(u_{ii}\otimes -) + \sum_j c(u_{ij}\otimes u_{ij}-)-\sum_jc(u_{ij} \otimes u_{ij})\varepsilon(-)=0$$
which gives in particular
$$c(u_{ii}\otimes u_{ii}) +  c(u_{ii}\otimes u_{ii}^2) = 2c(u_{ii}\otimes u_{ii})= \sum_jc(u_{ij} \otimes u_{ij})$$
and the proof that $$c(u_{ii}\otimes u_{ii}) = \frac{1}{2}\sum_j c(u_{ji} \otimes u_{ji})$$
is similar. (7) follows from (1) at $i=j\not=k$, applied at $u_{ik}$.
To prove (8), first notice that
$$u_{ii}X_{ij}= d_{ij}u_{ii}=X_{ij}u_{ii}, \ u_{jj}X_{ij}= d_{ij}u_{jj}=X_{ij}u_{jj}$$
The cocycle identity $(\star)$, applied to $a=u_{ii}$, $b=X_{ij}$, gives
$$c(X_{ij}\otimes -) - c(u_{ii} X_{ij} \otimes -)+ c(u_{ii} \otimes X_{ij}-) - c(u_{ii} \otimes X_{ij})\varepsilon(-)=0$$
 and hence
$$c(X_{ij}\otimes u_{jj}) - c(u_{ii} X_{ij} \otimes u_{jj})+ c(u_{ii} \otimes X_{ij}u_{jj}) - c(u_{ii} \otimes X_{ij})=0$$
which gives $c(X_{ij}\otimes u_{jj})  = c(u_{ii} \otimes X_{ij})$. Using (4), we also have
$$c(X_{ij} \otimes u_{jj})= \sum_kd_{ik}c(u_{kj} \otimes u_{jj})=\sum_kd_{ik}c(u_{jj} \otimes u_{kj})=
c(u_{jj} \otimes X_{ij})$$
and similarly
$$c(X_{ij}\otimes u_{ii})=c(u_{ii}\otimes X_{ij})$$
This concludes the proof of the lemma.
\end{proof}

\begin{lemma} \label{rep}
 Let $c \in Z^2(A, \mathbb C)$ be a normalized $2$-cocycle. Put, for any $i,j$,
$\lambda_{ij}=-c(u_{ii} \otimes u_{ij})$. Then there exists an algebra map
$\rho_c \colon A \rightarrow T_3(A)$ such that
$$\rho_c(u_{ij}) = \begin{pmatrix}
  \delta_{ij} & c(u_{ij}\otimes -) & \lambda_{ij} \\
0 & u_{ij} & u_{ij}-\delta_{ij} \\
0 & 0  & \delta_{ij}
  \end{pmatrix}$$
\end{lemma}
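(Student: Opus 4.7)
My plan is to define $\rho_c$ as the algebra map from the free algebra on the generators $\{u_{ij}\}$ into $T_3(A)$ determined by the prescribed formula, and then to verify that every defining relation of $A=A_s(n,d)$ lies in the kernel. Since the product in $T_3(A)$ is given entrywise by the matrix-multiplication formula recalled earlier, each relation check splits into six entrywise verifications, and Lemma \ref{comput} has been designed precisely to supply the scalar identities needed in each one. Conceptually this realises the generator-wise analogue of Lemma \ref{useful}: I am prescribing $\psi(u_{ij}):=\lambda_{ij}$ and checking that $c+\delta\psi$ vanishes on every product forced by the defining relations, even though no global $\psi$ with $c=\delta(-\psi)$ need exist (the latter being precisely the point of Theorem \ref{thm:vanish}).

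For the row-orthogonality relation $u_{ij}u_{ik}=\delta_{jk}u_{ij}$, the $(1,1)$, $(3,3)$, $(2,2)$ and $(2,3)$ entries are immediate from the relation itself together with $\delta_{ij}\delta_{ik}=\delta_{jk}\delta_{ij}$. The $(1,2)$ entry, viewed as a map on $A^+$, amounts to
$$\delta_{ij}\,c(u_{ik}\otimes -)+c(u_{ij}\otimes u_{ik}\,-)=\delta_{jk}\,c(u_{ij}\otimes -),$$
which follows from part (1) of Lemma \ref{comput} when $j\neq k$ and from part (5) when $j=k$. The $(1,3)$ entry, after using $c(u_{ij}\otimes 1)=0$, becomes the scalar identity
$$\delta_{ij}\lambda_{ik}+c(u_{ij}\otimes u_{ik})+\lambda_{ij}\delta_{ik}=\delta_{jk}\lambda_{ij},$$
and this splits by case: it is part (3) when $i,j,k$ are pairwise distinct; the definition of $\lambda_{ij}$ when $i=j\neq k$; part (4) when $i=k\neq j$; parts (4) and (7) together when $j=k\neq i$; and the definition of $\lambda_{ii}$ when $i=j=k$. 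The column-orthogonality relation is entirely symmetric, using parts (2), (4), (6) and (7).

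The sum-to-one and $d$-commutation relations are lighter. For $\sum_j u_{ij}=1$ the $(1,2)$ and $(1,3)$ entries reduce respectively to $c(1\otimes -)=0$ and $\sum_j\lambda_{ij}=-c(u_{ii}\otimes 1)=0$ by normalization of $c$; for $\sum_j u_{ji}=1$ the only non-obvious entry is $\sum_j\lambda_{ji}$, which by part (4) equals $-\sum_j c(u_{ji}\otimes u_{ii})=-c(1\otimes u_{ii})=0$. For the relation $\sum_k d_{ik}u_{kj}=\sum_k u_{ik}d_{kj}$, the $(2,2)$ and $(1,2)$ entries express the well-definedness of $X_{ij}$ directly, while the $(1,3)$ entry collapses, after applying part (4) on both sides, to
$$c(u_{jj}\otimes X_{ij})=c(u_{ii}\otimes X_{ij}),$$
which is exactly part (8) of Lemma \ref{comput}.

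The entire difficulty is concentrated in the $(1,3)$ coordinate of each check: this is where the scalar parameters $\lambda_{ij}$ get entangled with ``diagonal'' cocycle values such as $c(u_{ij}\otimes u_{ij})$ and $c(u_{ii}\otimes X_{ij})$ that are \emph{not} direct consequences of the cocycle identity. The symmetry and trace-type identities collected in parts (4), (6), (7), (8) of Lemma \ref{comput} were set up precisely to feed into these $(1,3)$ computations, so once one has digested that lemma the rest of the argument is essentially a bookkeeping of cases.
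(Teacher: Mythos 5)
Your proposal is correct and follows essentially the same route as the paper: define $\rho_c$ on the generators and verify each defining relation of $A_s(n,d)$ entrywise in $T_3(A)$, with the $(1,2)$ entries handled by parts (1), (2), (5) of Lemma \ref{comput} and the $(1,3)$ entries by the case analysis resting on parts (3), (4), (7), (8). Your explicit case split on coincidences among $i,j,k$ for the $(1,3)$ coordinate is a slightly more detailed bookkeeping of the same computation the paper performs.
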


\begin{proof}
 Let $t_{ij}$ be the element of $T_3(A)$ corresponding to the above matrix. We have to check that the elements $t_{ij}$ satisfy the defining relations of $A$. Using the fact that $c$ is normalized, we easily see that $\sum_{j=1}^nt_{ij}=1$, and similarly, using Relations (4) in Lemma \ref{comput}, we see that $\sum_{j=1}^n t_{ji}=1$. We have
\begin{align*}t_{ij}t_{ik}& = \begin{pmatrix}
  \delta_{ij} \delta_{ik}& \delta_{ij}c(u_{ik}\otimes -) +c(u_{ij}\otimes u_{ik}-)& \delta_{ij}\lambda_{ik} +c(u_{ij}\otimes (u_{ik}-\delta_{ik}))+ \lambda_{ij}\delta_{ik}\\
0 & u_{ij} u_{ik}& u_{ij}u_{ik}-\delta_{ik}u_{ij}+\delta_{ik}u_{ij}-\delta_{ij}\delta_{ik} \\
0 & 0  & \delta_{ij}\delta_{ik}
  \end{pmatrix} \\
&= \begin{pmatrix}
  \delta_{ij} \delta_{ik}& \delta_{ij}c(u_{ik}\otimes -) +c(u_{ij}\otimes u_{ik}-)& \delta_{ij}\lambda_{ik} +c(u_{ij}\otimes u_{ik})+ \lambda_{ij}\delta_{ik}\\
0 & u_{ij} u_{ik}& u_{ij}u_{ik}-\delta_{ij}\delta_{ik} \\
0 & 0  & \delta_{ij}\delta_{ik}
  \end{pmatrix}
\end{align*}
Hence for $j \not=k$ we have
$$t_{ij}t_{ik} = \begin{pmatrix}
  0& \delta_{ij}c(u_{ik}\otimes -) +c(u_{ij}\otimes u_{ik}-)& -\delta_{ij}c(u_{ii} \otimes u_{ik}) +c(u_{ij}\otimes u_{ik})- c(u_{ii} \otimes u_{ij})\delta_{ik}\\
0 & 0& 0\\
0 & 0  & 0
  \end{pmatrix}$$
By (1) in Lemma \ref{comput}, we have $(\delta_{ij}c(u_{ik}\otimes -) +c(u_{ij}\otimes u_{ik}-))_{|A^+}=0$. Moreover  by (3) and (4) in Lemma \ref{comput} we have
$-\delta_{ij}c(u_{ii} \otimes u_{ik}) +c(u_{ij}\otimes u_{ik})- c(u_{ii} \otimes u_{ij})\delta_{ik}=0$. Hence $t_{ij}t_{ik}=0$ for $j \not =k$. We have also
$$t_{ij}^2 = \begin{pmatrix}
  \delta_{ij} & \delta_{ij}c(u_{ij}\otimes -) +c(u_{ij}\otimes u_{ij}-)& 2\delta_{ij}\lambda_{ij} +c(u_{ij}\otimes u_{ij})\\
0 & u_{ij}& u_{ij}-\delta_{ij} \\
0 & 0  & \delta_{ij}
  \end{pmatrix}$$
We have 
$$(\delta_{ij}c(u_{ij}\otimes -) +c(u_{ij}\otimes u_{ij}-))_{|A^+} = c(u_{ij}\otimes -)_{|A^+}$$
by (5) in Lemma \ref{comput}, and $-2\delta_{ij}c(u_{ii} \otimes u_{ij}) +c(u_{ij}\otimes u_{ij})=-c(u_{ii} \otimes u_{ij})$ by (7) and (4) in Lemma \ref{comput}, hence $t_{ij}^2=t_{ij}$.

The proof that for $t_{ji}t_{ki}=0$ for $j \not=k$ is similar, using (2), (3) and (4). Finally that $\sum_kd_{ik}t_{kj} = \sum_k t_{ik}d_{kj}$ follows from (4) and (8) in Lemma \ref{comput}.
\end{proof}

We are now ready to prove Theorem \ref{thm:vanish}. Let $c \in Z^2(A, \mathbb C)$ be a Hochschild $2$-cocycle, that we can assume to be normalized without changing its cohomology class.
 
A representation $\rho \colon A \rightarrow T_3(A)$  is necessarily of the form  
$$a \longmapsto \begin{pmatrix}
  \varepsilon_1(a) & C(a) & \psi(a) \\
0 & f(a) & g(a) \\
0 & 0  & \varepsilon_2(a)
  \end{pmatrix}$$
for algebra maps $$\varepsilon_1, \ \varepsilon_2 \colon A \rightarrow \mathbb C, \  f \colon A \rightarrow A$$ and linear  maps
$$C \colon A \rightarrow (A^+)^*, \ g \colon A \rightarrow A^+, \ \psi \colon A \rightarrow \mathbb C$$
Using the definition of the representation $\rho_c$ of Lemma \ref{rep}, we see that for any $a \in A$ we have
$$\rho_c(a)= \begin{pmatrix}
  \varepsilon(a) & C(a) & \psi(a) \\
0 & a & a -\varepsilon(a) \\
0 & 0  & \varepsilon(a)
  \end{pmatrix}$$
with moreover, for any $a,b$, $C(ab)= \varepsilon(a)C(b) + C(a)(b-)$. Since $C(u_{ij})=c(u_{ij}\otimes -)$, an easy induction shows that $C(a)= c(a \otimes -)$ for any $a$. We conclude from Lemma \ref{useful} that $c$ is a coboundary, as needed.

\end{document}